\def\baro{\vskip  .2truecm\hfill \hrule height.5pt \vskip  .2truecm}
\def\barba{\vskip -.1truecm\hfill \hrule height.5pt \vskip .4truecm}
\newcommand{\pcite}[1]{\citeauthor{#1}'s \citeyearpar{#1}}
\newtheorem{proposition}{Proposition}
\newtheorem{corollary}{Corollary}
\newtheorem{lemma}{Lemma}
\newtheorem{theorem}{Theorem}
\newtheorem{remark}{Remark}
\newcommand{\X}{{\mathsf{X}}}
\newcommand{\Z}{{\mathsf{Z}}}
\begin{document}

\title{Convergence Analysis of MCMC Algorithms for Bayesian
  Multivariate Linear Regression with Non-Gaussian Errors}
\author{James P. Hobert, Yeun Ji Jung, Kshitij Khare and Qian Qin \\
  Department of Statistics \\ University of Florida} \date{January
  2016}

\keywords{Data augmentation algorithm, Drift condition, Geometric
  ergodicity, Haar PX-DA algorithm, Heavy-tailed distribution,
  Minorization condition, Scale mixture}

\abtitle{Convergence of MCMC algorithms}

\amssubj{Primary 60J05; secondary 62F15}

\maketitle

\begin{abstract}
  Gaussian errors are sometimes inappropriate in a multivariate linear
  regression setting because, for example, the data contain outliers.
  In such situations, it is often assumed that the error density is a
  scale mixture of multivariate normal densities that takes the form
  $f(\varepsilon) = \int_0^\infty |\Sigma|^{-\frac{1}{2}}
  u^{\frac{d}{2}} \, \phi_d \big( \Sigma^{-\frac{1}{2}} \sqrt{u} \,
  \varepsilon \big) \, h(u) \, du$, where $d$ is the dimension of the
  response, $\phi_d(\cdot)$ is the standard $d$-variate normal
  density, $\Sigma$ is an unknown $d \times d$ positive definite scale
  matrix, and $h(\cdot)$ is some fixed mixing density.  Combining this
  alternative regression model with a default prior on the unknown
  parameters results in a highly intractable posterior density.
  Fortunately, there is a simple data augmentation (DA) algorithm and
  a corresponding Haar PX-DA algorithm that can be used to explore
  this posterior.  This paper provides conditions (on $h$) for
  geometric ergodicity of the Markov chains underlying these Markov
  chain Monte Carlo (MCMC) algorithms.  These results are extremely
  important from a practical standpoint because geometric ergodicity
  guarantees the existence of the central limit theorems that form the
  basis of all the standard methods of calculating valid asymptotic
  standard errors for MCMC-based estimators.  The main result is that,
  if $h$ converges to 0 at the origin at an appropriate rate, and
  $\int_0^\infty u^{\frac{d}{2}} \, h(u) \, du < \infty$, then the DA
  and Haar PX-DA Markov chains are both geometrically ergodic.  This
  result is quite far-reaching.  For example, it implies the geometric
  ergodicity of the DA and Haar PX-DA Markov chains whenever $h$ is
  generalized inverse Gaussian, log-normal, inverted gamma (with shape
  parameter larger than $d/2$), or Fr\'{e}chet (with shape parameter
  larger than $d/2$).  The result also applies to certain subsets of
  the gamma, $F$, and Weibull families.
\end{abstract}

\section{Introduction}
  \label{sec:intro}
  Let $Y_1,Y_2,\dots,Y_n$ be independent $d$-dimensional random
  vectors from the multivariate linear regression model
\begin{equation}
  \label{eq:mreg}
  Y_i = \beta^T x_i   + \Sigma^{\frac{1}{2}} \varepsilon_i \;,
\end{equation}
where $x_i$ is a $p \times 1$ vector of known covariates associated
with $Y_i$, $\beta$ is a $p \times d$ matrix of unknown regression
coefficients, $\Sigma$ is an unknown positive definite scale matrix,
and $\varepsilon_1,\dots,\varepsilon_n$ are iid errors.  In situations
where Gaussian errors are inappropriate, e.g., when the data contain
outliers, scale mixtures of multivariate normal densities constitute a
rich class of alternative error densities \citep[see,
e.g.,][]{andr:mall:1974,fern:stee:1999,fern:stee:2000,west:1984}.
These mixtures take the form
\begin{equation*}
  f_h(\varepsilon) = \int_{0}^{\infty} \frac{u^{\frac{d}{2}}}{(2
    \pi)^{\frac{d}{2}}} \, \exp \Big \{ -\frac{u}{2}
  \varepsilon^T \varepsilon \Big \} h(u) \, du \;,
\end{equation*}
where $h$ is the density function of some positive random variable.
We shall refer to $h$ as a \textit{mixing density}.  By varying the
mixing density, one can construct error densities with many different
types of tail behavior.  A well-known example is that when $h$ is the
density of a $\mbox{Gamma}(\frac{\nu}{2}, \frac{\nu}{2})$ random
variable, then $f_h$ becomes the multivariate Student's $t$ density
with $\nu$ degrees of freedom, which, aside from a normalizing
constant, is given by $\big[ 1 + {\nu}^{-1} \varepsilon^T
\varepsilon\big]^{-\frac{d + \nu}{2}}$.

Let $Y$ denote the $n \times d$ matrix whose $i$th row is $Y_i^T$, and
let $X$ stand for the $n \times p$ matrix whose $i$th row is $x_i^T$,
and, finally, let $\varepsilon$ represent the $n \times d$ matrix
whose $i$th row is $\varepsilon_i^T$.  Using this notation, we can
state the $n$ equations in \eqref{eq:mreg} more succinctly as follows
\begin{equation}
  \label{eq:mreg2}
  Y = X \beta + \varepsilon \, \Sigma^{\frac{1}{2}} \;.
\end{equation}
Let $y$ and $y_i$ denote the observed values of $Y$ and $Y_i$,
respectively.

Consider a Bayesian analysis of the data from the regression model
\eqref{eq:mreg2} using an improper prior on $(\beta,\Sigma)$ that
takes the form $\omega (\beta , \Sigma) \propto |\Sigma|^{-a} \,
I_{{\cal S}_d}(\Sigma)$ where ${\cal S}_d \subset
\mathbb{R}^{\frac{d(d + 1)}{2}}$ denotes the space of $d \times d$
positive definite matrices.  Taking $a=(d+1)/2$ yields the
independence Jeffreys prior, which is a standard default prior for
multivariate location scale problems.  The joint density of the data
from model \eqref{eq:mreg2} is, of course, given by
\begin{equation}
  \label{eq:joint}
  f (y | \beta, \Sigma) = \prod_{i=1}^n \Bigg[ \int_{0}^{\infty}
  \frac{u^{\frac{d}{2}}}{(2\pi)^{\frac{d}{2}} |\Sigma|^{\frac{1}{2}}}
  \exp \bigg\{ -\frac{u}{2} \Big(y_i - \beta^T x_i\Big)^T
  \Sigma^{-1}\Big(y_i - \beta^T x_i\Big) \bigg\} h(u) \, du \Bigg] \;.
\end{equation}
Define
\[
m(y) = \int_{{\cal S}_d} \int_{\mathbb{R}^{p \times d}}
f(y|\beta,\Sigma) \, \omega(\beta,\Sigma) \, d\beta \, d\Sigma \;.
\]
The posterior distribution is proper precisely when $m(y) < \infty$.
Let $\Lambda$ denote the $n \times (p+d)$ matrix $(X:y)$.  As we shall
see, the following conditions are necessary for propriety:
\begin{enumerate}
  \item[($N1$)] $\mbox{rank}(\Lambda) = p+d \;;$
  \item[($N2$)] $n > p + 2d - 2a \;.$
\end{enumerate}
We assume throughout the paper that $(N1)$ and $(N2)$ hold.  Under
these two conditions, the Markov chain of interest is well-defined,
and we can engage in a convergence rate analysis whether the posterior
is proper or not.  This is a subtle point upon which we will expand in
Section~\ref{sec:d_m}.

Of course, when the posterior is proper, it is given by
\[
\pi^*(\beta, \Sigma|y) = \frac{f(y|\beta,\Sigma) \,
  \omega(\beta,\Sigma)}{m(y)} \;.
\]
This density is (nearly always) intractable in the sense that
posterior expectations cannot be computed in closed form.  However,
there is a well-known data augmentation algorithm (or two-variable
Gibbs sampler) that can be used to explore this intractable posterior
density \citep[see, e.g.,][]{liu:1996}.  In order to state this
algorithm, we must introduce some additional notation.  For $z =
(z_1,\dots,z_n)$, let $Q$ be an $n \times n$ diagonal matrix whose
$i$th diagonal element is $z_i^{-1}$.  Also, define $\Omega = (X^T
Q^{-1} X)^{-1}$ and $\mu = (X^T Q^{-1} X)^{-1} X^T Q^{-1} y$.  We
shall assume throughout the paper that
\[
\int_0^\infty u^{\frac{d}{2}} \, h(u) \, du < \infty \;,
\]
where $h$ is the mixing density, and we will refer to this condition
as ``condition ${\cal M}$.''  Finally, define a parametric family of
univariate density functions indexed by $s \ge 0$ as follows
\begin{equation*}
  \psi(u;s) = b(s) \, u^{\frac{d}{2}} \, e^{ -\frac{s u}{2}} \, h(u) \;,
\end{equation*}
where $b(s)$ is the normalizing constant.  The data augmentation (DA)
algorithm calls for draws from the inverse Wishart ($\mbox{IW}_d$) and
matrix normal ($\mbox{N}_{p,d}$) distributions.  The precise forms of
the densities are given in the Appendix.  We now present the DA
algorithm.  If the current state of the DA Markov chain is
$(\beta_m,\Sigma_m) = (\beta,\Sigma)$, then we simulate the new state,
$(\beta_{m+1},\Sigma_{m+1})$, using the following three-step
procedure.

\baro \vspace*{2mm}
\noindent {\rm Iteration $m+1$ of the DA algorithm:}
\begin{enumerate}
\item Draw $\{Z_i\}_{i=1}^n$ independently with $Z_i \sim \psi \Big(
  \cdot \;; \big( \beta^T x_i - y_i \big)^T \Sigma^{-1} \big( \beta^T
  x_i - y_i \big) \Big)$, and call the result $z = (z_1,\dots,z_n)$.
\item Draw
\[
\Sigma_{m+1} \sim \mbox{IW}_d \bigg( n-p+2a-d-1, \Big( y^T Q^{-1} y -
  \mu^T \Omega^{-1} \mu \Big)^{-1} \bigg)  \;.
\]
\item Draw $\beta_{m+1} \sim \mbox{N}_{p,d} \big( \mu, \Omega,
  \Sigma_{m+1} \big)$ \vspace*{-2.5mm}
\end{enumerate}
\barba

Obviously, in order to run this algorithm, one must be able to make
draws from $\psi(\cdot \, ;s)$.  When $h$ is a standard density,
$\psi$ often turns out to be one as well.  For example, when $h$ is a
gamma density, $\psi$ is also gamma, and when $h$ is inverted gamma,
$\psi$ is generalized inverse Gaussian (see Section~\ref{sec:ex}).
Even when $\psi$ is not a standard density, it is still a simple
entity - a univariate density on $(0,\infty)$ - and so is usually
amenable to straightforward sampling.  In particular, if it is
possible to make draws from $h$, then $h$ can be used as the candidate
in a simple rejection sampler for $\psi$.

Denote the DA Markov chain by $\Phi =
\{(\beta_m,\Sigma_m)\}_{m=0}^\infty$.  The main contribution of this
paper is to demonstrate that $\Phi$ is geometrically ergodic as long
as $h$ converges to zero at the origin at an appropriate rate.  (A
formal definition of geometric ergodicity is given in
Section~\ref{sec:d_m}.)  Our result is remarkable both for its
simplicity and for its scope.  Indeed, the conditions turn out to be
extremely simple to check, and, at the same time, the result applies
to a huge class of Monte Carlo Markov chains.  It is well known among
Markov chain Monte Carlo (MCMC) experts that establishing geometric
ergodicity of practically relevant chains is extremely challenging.
Thus, it is noteworthy that we are able to handle so many such chains
simultaneously.  Of course, the important practical and theoretical
benefits of basing one's MCMC algorithm on a geometrically ergodic
Markov chain have been well-documented by, e.g.,
\citet{robe:rose:1998}, \citet{jone:hobe:2001} and
\citet{fleg:hara:jone:2008}.  In order to give a precise statement of
our main result, we now define three classes of mixing densities based
on behavior near the origin.

Define $\mathbb{R}_+ = (0,\infty)$, and let $h: \mathbb{R}_+
\rightarrow [0,\infty)$ be a mixing density.  If there is a $\delta>0$
such that $h(u)=0$ for all $u \in (0,\delta)$, then we say that $h$ is
\textit{zero near the origin}.  Now assume that $h$ is strictly
positive in a neighborhood of 0 (i.e., $h$ is not zero near the
origin).  If there exists a $c>-1$ such that
\[
\lim_{u \rightarrow 0} \frac{h(u)}{u^c} \in \mathbb{R}_+ \;,
\]
then we say that $h$ is \textit{polynomial near the origin} with power
$c$.  Finally, if for every $c>0$, there exists an $\eta_c>0$ such
that the ratio $\frac{h(u)}{u^c}$ is strictly increasing in
$(0,\eta_c)$, then we say that $h$ is \textit{faster than polynomial
  near the origin}.

Every mixing density that is a member of a standard parametric family
is either polynomial near the origin, or faster than polynomial near
the origin.  Indeed, the gamma, beta, $F$, Weibull, and shifted Pareto
densities are all polynomial near the origin, whereas the inverted
gamma, log-normal, generalized inverse Gaussian, and Fr\'{e}chet
densities are all faster than polynomial near the origin.  We
establish these facts in Section~\ref{sec:ex}.  Here is our main
result.

\begin{theorem}
  \label{thm:main}
  Let $h$ be a mixing density that satisfies condition ${\cal M}$.
  Assume that $h$ is zero near the origin, or faster than polynomial
  near the origin, or polynomial near the origin with power $c >
  \frac{n-p+2a-d-1}{2}$.  Then the posterior distribution is proper
  and the DA Markov chain is geometrically ergodic.
\end{theorem}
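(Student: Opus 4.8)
The plan is to verify a geometric drift condition together with an associated minorization condition, and then invoke the standard drift-and-minorization results for geometric ergodicity. Since one iteration of the DA algorithm passes through the latent vector $z=(z_1,\dots,z_n)$ drawn in Step~1, I will build the drift function out of $z$. Take
\[
V(\beta,\Sigma)\;=\;\sum_{i=1}^n\int_0^\infty u^{-1}\,\psi\big(u;s_i(\beta,\Sigma)\big)\,du
\;=\;\Ex\Big[\,\textstyle\sum_{i=1}^n Z_i^{-1}\;\Big|\;(\beta,\Sigma)\Big],
\]
where $s_i(\beta,\Sigma)=\big(\beta^Tx_i-y_i\big)^T\Sigma^{-1}\big(\beta^Tx_i-y_i\big)$ and the $Z_i$ are as in Step~1; equivalently, one may run the entire analysis on the conjugate chain on $z$-space with drift function $\sum_i z_i^{-1}$. (One checks that $V$ is finite everywhere, using condition $\mathcal{M}$, and bounded below by a positive constant, so that $V$ can be made $\ge 1$ by adding a constant.) Verifying the drift inequality then reduces to two essentially independent estimates: (i) a bound on $\Ex[s_i\mid z]$, controlling how large the Mahalanobis residuals become, on average, after Steps~2 and~3; and (ii) a bound on the reciprocal moment $\int_0^\infty u^{-1}\psi(u;s)\,du$, uniform in $s\ge0$ up to a controlled linear term in $s$. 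Ingredient (ii) is the place where condition $\mathcal{M}$ and the behaviour of $h$ near the origin enter.

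For (i), condition on $z$. Step~2 amounts to drawing $\Sigma^{-1}$ from a $d$-dimensional Wishart distribution with $\nu:=n-p+2a-d-1$ degrees of freedom and $\Ex[\Sigma^{-1}\mid z]=\nu A^{-1}$, where $A=y^TQ^{-1}y-\mu^T\Omega^{-1}\mu$; a short computation shows $A=\sum_{j=1}^n z_j\,\hat r_j\hat r_j^T$ with $\hat r_j=\mu^Tx_j-y_j$ the $j$th weighted least squares residual. Conditionally on $\Sigma$ and $z$, Step~3 makes $\beta^Tx_i-y_i$ $d$-variate normal with mean $\hat r_i$ and covariance $\big(x_i^T\Omega x_i\big)\Sigma$, so that after Steps~2 and~3,
\[
\Ex\big[s_i\mid z\big]\;=\;\big(x_i^T\Omega x_i\big)\,d\;+\;\nu\,\hat r_i^TA^{-1}\hat r_i\,.
\]
Now $z_i\,x_ix_i^T\preceq X^TQ^{-1}X$ and $z_i\,\hat r_i\hat r_i^T\preceq A$, and the elementary inequality $v^TM^{-1}v\le z_i^{-1}$ whenever $z_i\,vv^T\preceq M$ gives $x_i^T\Omega x_i\le z_i^{-1}$ and $\hat r_i^TA^{-1}\hat r_i\le z_i^{-1}$; hence $\Ex[s_i\mid z]\le(d+\nu)/z_i$.

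For (ii), write $b(s)^{-1}=\int_0^\infty u^{d/2}e^{-su/2}h(u)\,du$, finite for every $s\ge0$ by condition $\mathcal{M}$, so that $\int_0^\infty u^{-1}\psi(u;s)\,du$ equals the ratio $\big(\int_0^\infty u^{d/2-1}e^{-su/2}h(u)\,du\big)\big/\big(\int_0^\infty u^{d/2}e^{-su/2}h(u)\,du\big)$. A Laplace-type analysis of these integrals as $s\to\infty$ shows: the ratio is bounded uniformly in $s$ when $h$ is zero near the origin; it is $o(s)$ when $h$ is faster than polynomial near the origin (here one feeds the monotonicity of $u\mapsto h(u)/u^c$, available for arbitrarily large $c$, into Watson's lemma); and it is asymptotic to $s/(d+2c)$ when $h$ is polynomial near the origin with power $c$. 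In each case there are a finite $K$ and a constant $\rho_0$ with $\rho_0(d+\nu)<1$ such that $\int_0^\infty u^{-1}\psi(u;s)\,du\le K+\rho_0 s$ for all $s\ge0$: take $\rho_0=0$ in the first case, $\rho_0$ arbitrarily small in the second, and $\rho_0=\theta/(d+2c)$ with $\theta>1$ close enough to~$1$ in the third — possible precisely because $c>\tfrac{n-p+2a-d-1}{2}=\nu/2$ forces $(d+\nu)/(d+2c)<1$. This is the exact point at which the threshold in the theorem is used. Combining with~(i),
\[
\Ex\big[V(\beta_{m+1},\Sigma_{m+1})\mid z\big]\;\le\;\sum_{i=1}^n\big(K+\rho_0\,\Ex[s_i\mid z]\big)\;\le\;nK+\rho_0(d+\nu)\sum_{i=1}^n z_i^{-1}\,,
\]
and a further expectation over $z\mid(\beta_m,\Sigma_m)$, together with $\Ex\big[\sum_i z_i^{-1}\mid(\beta_m,\Sigma_m)\big]=V(\beta_m,\Sigma_m)$, yields the geometric drift condition $\Ex[V(\beta_{m+1},\Sigma_{m+1})\mid(\beta_m,\Sigma_m)]\le nK+\rho\,V(\beta_m,\Sigma_m)$ with $\rho:=\rho_0(d+\nu)<1$.

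It remains to check $\phi$-irreducibility and aperiodicity — immediate, since every conditional density used in the algorithm is strictly positive — and to establish a minorization condition $P\big((\beta,\Sigma),\cdot\big)\ge\epsilon\,\varrho(\cdot)$ on a sublevel set $\{V\le L\}$. The one subtlety is that such sublevel sets are not compact (they allow $z_i\to\infty$), but the one-step transition density extends continuously to, and remains strictly positive in, each such limit, which is enough to extract the required uniform lower bound. Drift plus minorization then gives geometric ergodicity; it also gives positive recurrence, so the DA kernel has an invariant measure that is unique up to scaling and finite. Since the (a priori possibly improper) posterior is an invariant measure for the DA kernel regardless of its total mass — the DA construction uses only the proper conditionals — it must be the finite one, i.e., the posterior is proper. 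I expect the main obstacle to be ingredient~(ii): obtaining the sharp asymptotics of the reciprocal $\psi$-moment — the precise constant $1/(d+2c)$ in the polynomial case, and the clean $o(s)$ rate in the faster-than-polynomial case, where there is no power law to feed into Watson's lemma and one must instead exploit the monotonicity in the definition — because it is the comparison of that constant with the Wishart degrees of freedom $\nu$ that produces the exact threshold appearing in the statement.
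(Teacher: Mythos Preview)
Your plan is essentially the paper's: drift plus minorization, with the two key ingredients being (i) $\Ex\big[\sum_i s_i\mid z\big]\le (n-p+2a-1)\sum_i z_i^{-1}$ (your $d+\nu$ is the paper's $n-p+2a-1$) and (ii) a linear-in-$s$ bound on the ratio $\int u^{d/2-1}e^{-su/2}h\big/\int u^{d/2}e^{-su/2}h$, case-split according to the behaviour of $h$ at the origin, with the asymptotic slope $1/(d+2c)$ in the polynomial case producing exactly the threshold $c>\nu/2$.

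The one substantive difference is the drift function. The paper takes $V_0(\beta,\Sigma)=\sum_i s_i(\beta,\Sigma)$ rather than your $V=\Ex\big[\sum_i Z_i^{-1}\mid\beta,\Sigma\big]$ (or the $z$-chain analogue $\sum_i z_i^{-1}$). With $V_0$ the minorization on $\{V_0\le l\}$ is a three-line calculation: $b(\tilde r_i)\ge b(0)$ and $e^{-\tilde r_i u/2}\ge e^{-lu/2}$ give an explicit product lower bound on $\pi(z\mid\tilde\beta,\tilde\Sigma,y)$. Your minorization, by contrast, is the weak point of the proposal. On the $z$-chain the sublevel set $\{\sum_i z_i^{-1}\le L\}\subset[1/L,\infty)^n$ is unbounded, and the claim that the transition density ``extends continuously to, and remains strictly positive in, each such limit'' is not obvious: as $z_i\to\infty$ the matrix-normal factor $\pi(\beta\mid\Sigma,z,y)$ degenerates (its covariance $\Omega$ becomes singular in the $x_i$ direction), so one must argue at the level of the marginal $\tilde k(z'\mid z)$, not the integrand. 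A cheap fix, if you keep your drift on the $(\beta,\Sigma)$-chain: the map $s\mapsto\Ex_{\psi(\cdot;s)}[U^{-1}]$ is nondecreasing (likelihood-ratio ordering), so $\{V\le L\}\subset\{V_0\le l\}$ for some $l$, and the paper's easy minorization applies verbatim.

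For ingredient~(ii) the paper replaces your Laplace/Watson sketch by two elementary comparison lemmas---one showing that the asymptotic slope $\lambda_h$ depends only on the behaviour of $h$ near $0$ (so one can compare to a Gamma density, for which the ratio is computed exactly), and one showing that a monotone-ratio comparison transfers the bound---which has the advantage of requiring no regularity on $h$ beyond the stated monotonicity of $h(u)/u^c$.
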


This result is more substantial than typical convergence rate results
for DA algorithms and Gibbs samplers in the sense that it applies to a
huge class of mixing densities, whereas typical results apply to
relatively small parametric families of Markov chains \citep[see,
e.g.,][]{pal:khar:2014}.  Note that, outside of the polynomial case,
the only regularity condition in Theorem~\ref{thm:main} is the rather
weak requirement that $\int_0^\infty u^{\frac{d}{2}} \, h(u) \, du <
\infty$.  Thus, for example, Theorem~\ref{thm:main} implies that if
$h$ is generalized inverse Gaussian, log-normal, inverted gamma (with
shape parameter larger than $d/2$), or Fr\'{e}chet (with shape
parameter larger than $d/2$), then the DA Markov chain converges at a
geometric rate.  

Another notable consequence of Theorem 1 is the following.  Suppose
that $h$ satisfies the conditions of Theorem~\ref{thm:main}, and let
$B>0$.  Note that we can alter $h$ on the set $[B,\infty)$ in any way
we like, and, as long as condition ${\cal M}$ continues to hold, the
corresponding Markov chain will still be geometrically ergodic.

When $h$ is polynomial near the origin, there is an extra regularity
condition for geometric ergodicity that can be somewhat restrictive.
For example, take the case where $h$ is the gamma density with shape
and rate both equal to $\nu/2$ (so the error density is Student's $t$
with $\nu$ degrees of freedom).  In this case, Theorem~\ref{thm:main}
implies that the DA Markov chain will converge at a geometric rate as
long as $\nu > n-p+2a-d+1$.  If $n-p+2a-d+1$ is small, then this
condition is not too troublesome.  However, if this number happens to
be large, then Theorem~\ref{thm:main} applies only when the degrees of
freedom of the $t$ distribution are large, which is not very useful.
It is an open question whether the condition $c >
\frac{n-p+2a-d-1}{2}$ is necessary.

A couple of special cases of Theorem~\ref{thm:main} have appeared
previously in the literature.  In particular, the result for the gamma
mixing density described above was established by
\citet{roy:hobe:2010} in the special case of the independence Jeffreys
prior where $a = (d+1)/2$.  Also, \citet{jung:hobe:2014} showed that,
when $d=1$ and the mixing density is inverted gamma with shape
parameter larger than 1/2, the Markov operator associated with the DA
Markov chain is a \textit{trace-class} operator, which implies that
the corresponding chain converges at a geometric rate.

It is often possible to convert a DA algorithm into a Haar PX-DA
algorithm that is theoretically superior to the underlying DA
algorithm, yet essentially equivalent in terms of simulation effort
\citep[see, e.g.,][]{liu:wu:1999,hobe:marc:2008}.  In fact,
\citet{roy:hobe:2010} developed a Haar PX-DA variant of the DA
algorithm described above for the special case in which
$a=\frac{d+1}{2}$.  It turns out that, when $a \ne \frac{d+1}{2}$, an
additional regularity condition on $h$ is required in order to define
this alternative algorithm.  In particular, the Haar PX-DA algorithm
can be defined only when
\begin{equation}
  \label{eq:H}
  \int_0^\infty t^{n+\frac{(d+1-2a)d}{2}-1} \, \Bigg[ \prod_{i=1}^n h(t z_i) \Bigg] \,
  dt < \infty
\end{equation}
for (almost) all $z \in \mathbb{R}_+^n$.  An argument similar to one
in used \citet[][Section 3]{roy:hobe:2010} shows that \eqref{eq:H}
holds if
\begin{equation}
  \label{eq:Ha}
\int_0^\infty u^{\frac{(d+1-2a)d}{2}} \, h(u) \, du < \infty \;.
\end{equation}
Note that \eqref{eq:Ha} \textit{always} holds when $a =
\frac{d+1}{2}$.  Now assume that \eqref{eq:H} holds, and define a
parametric family of density functions, indexed by $z \in
\mathbb{R}_+^n$, that take the form
\[
\xi(v;z) \propto v^{n+\frac{(d+1-2a)d}{2}-1} \, \Bigg[ \prod_{i=1}^n
h(v z_i) \Bigg] \, I_{\mathbb{R}_+}(v) \;.
\]
As with the parametric family $\psi(\cdot\,;s)$, when $h$ is a
standard density, $\xi$ often turns out to be standard as well.  For
example, if $h$ is gamma, inverted gamma, or generalized inverse
Gaussian, then $\xi$ turns out to be a member of the same parametric
family.  If the current state of the Haar PX-DA Markov chain is
$(\beta^*_m,\Sigma^*_m) = (\beta,\Sigma)$, then we simulate the new
state, $(\beta^*_{m+1},\Sigma^*_{m+1})$, using the following four-step
procedure.

\baro \vspace*{2mm}
\noindent {\rm Iteration $m+1$ of the Haar PX-DA algorithm:}
\begin{enumerate}
\item Draw $\{Z'_i\}_{i=1}^n$ independently with $Z'_i \sim \psi \Big(
  \cdot \;; \big( \beta^T x_i - y_i \big)^T \Sigma^{-1} \big( \beta^T x_i -
  y_i \big) \Big)$, and call the result $z' = (z'_1,\dots,z'_n)$.
\item Draw $V \sim \xi(\cdot \, ;z')$, call the result $v$, and set $z
  = (vz'_1,\dots,vz'_n)^T$.
\item Draw
\[
\Sigma^*_{m+1} \sim \mbox{IW}_d \bigg( n-p+2a-d-1, \Big( y^T Q^{-1} y -
  \mu^T \Omega^{-1} \mu \Big)^{-1} \bigg)  \;.
\]
\item Draw $\beta^*_{m+1} \sim \mbox{N}_{p,d} \big( \mu, \Omega,
  \Sigma^*_{m+1} \big)$ \vspace*{-2.5mm}
\end{enumerate}
\barba

Note that the only difference between this algorithm and the DA
algorithm is one extra univariate draw (from $\xi(\cdot \, ; \,
\cdot)$) per iteration.  Hence, the two algorithms are virtually
equivalent from a computational standpoint.  Theoretically, the Haar
PX-DA algorithm is at least as good as the DA algorithm, both in terms
of convergence rate (operator norm) and asymptotic efficiency
\citep{liu:wu:1999,hobe:marc:2008,khar:hobe:2011}.  Moreover, there is
a great deal of empirical evidence that the Haar PX-DA algorithm can
be far superior \citep[see, e.g.][]{meng:vand:1999,vand:meng:2001}.
The following corollary to Theorem~\ref{thm:main} is an immediate
consequence of the fact that, in general, the norm of the Markov
operator of a Haar PX-DA chain is no larger than that of the
underlying DA chain.

\begin{corollary}
  \label{cor:cor_main} 
  Let $h$ be a mixing density that satisfies condition ${\cal M}$, and
  assume that \eqref{eq:H} holds.  Assume that $h$ is zero near the
  origin, or faster than polynomial near the origin, or polynomial
  near the origin with power $c > \frac{n-p+2a-d-1}{2}$.  Then the
  Haar PX-DA Markov chain is geometrically ergodic.
\end{corollary}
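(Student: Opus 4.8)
The plan is to deduce geometric ergodicity of the Haar PX-DA chain $\Phi^*$ from that of the DA chain $\Phi$ (Theorem~\ref{thm:main}) through a Hilbert-space operator-norm argument. First I would record what the hypotheses buy us: condition $\mathcal{M}$ together with the near-origin assumption place us squarely in the setting of Theorem~\ref{thm:main}, so the posterior $\pi^*(\cdot\,|y)$ is proper and $\Phi$ is geometrically ergodic; and \eqref{eq:H} is exactly the requirement that the density $\xi(\cdot\,;z)$ appearing in Step~2 be a genuine probability density for (almost) every $z$, so that $\Phi^*$ is a well-defined Markov chain with stationary density $\pi^*(\cdot\,|y)$. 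Both $\Phi$ and $\Phi^*$ are reversible with respect to $\pi^*(\cdot\,|y)$: for $\Phi$ this is the standard reversibility of a data augmentation chain, and for $\Phi^*$ it is the defining feature of the Haar PX-DA (or sandwich) construction of \citet{liu:wu:1999} and \citet{hobe:marc:2008}, carried out here exactly as in \citet[][Section~3]{roy:hobe:2010} for the case $a=\frac{d+1}{2}$, with \eqref{eq:H} supplying the integrability needed for the group-invariance step.

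Next I would work in $L^2_0(\pi^*)$, the space of functions with mean zero and finite variance under $\pi^*$, and let $P_{\Phi}$ and $P_{\Phi^*}$ denote the corresponding Markov operators. Since $\Phi$ is reversible and geometrically ergodic, the spectral characterization of geometric ergodicity for reversible chains (see, e.g., \citet{robe:rose:1998}) gives $\|P_{\Phi}\| < 1$. The central input is then the general operator-norm comparison for Haar PX-DA chains, $\|P_{\Phi^*}\| \le \|P_{\Phi}\|$, established by \citet{hobe:marc:2008} and sharpened by \citet{khar:hobe:2011}; combining this with the previous bound yields $\|P_{\Phi^*}\| < 1$. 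Applying the spectral characterization in the reverse direction, $\Phi^*$ is a reversible chain whose Markov operator has norm strictly less than one, hence it is geometrically ergodic, which is the assertion of the corollary.

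Since Theorem~\ref{thm:main} and the cited operator-theoretic results do all of the analytic work, there is no hard estimate left to carry out; the steps that need care are organizational. One is verifying that \eqref{eq:H}, rather than merely the sufficient condition \eqref{eq:Ha}, is precisely what makes $\xi(\cdot\,;z)$, and hence $\Phi^*$, well-defined. The other --- where I would expect to spend the most effort --- is confirming that $\Phi^*$ is $\phi$-irreducible, aperiodic, and Harris recurrent, so that the reversible-chain characterization applies to $\Phi^*$ and yields convergence from every (not merely $\pi^*$-almost every) starting state; this holds because the one-step transition density of $\Phi^*$, like that of $\Phi$, is strictly positive with respect to a common dominating measure, which rules out the measure-zero exceptional sets that could otherwise intervene.
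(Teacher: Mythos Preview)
Your proposal is correct and follows essentially the same route as the paper, which states only that the corollary ``is an immediate consequence of the fact that, in general, the norm of the Markov operator of a Haar PX-DA chain is no larger than that of the underlying DA chain.'' You have simply unpacked that sentence: Theorem~\ref{thm:main} gives propriety and geometric ergodicity of $\Phi$; reversibility plus the \citet{robe:rose:1998} characterization converts this to $\|P_\Phi\|<1$; the \citet{hobe:marc:2008}/\citet{khar:hobe:2011} comparison gives $\|P_{\Phi^*}\|\le\|P_\Phi\|<1$; and reversibility plus the characterization in the other direction yields geometric ergodicity of $\Phi^*$, with the positivity of the Haar PX-DA transition density supplying the needed irreducibility, aperiodicity, and Harris recurrence.
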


The remainder of this paper is organized as follows.
Section~\ref{sec:DA} contains a brief description of the latent data
model that leads to the DA algorithm, as well as a formal definition
of the DA Markov chain.  Section~\ref{sec:d_m} contains a drift and
minorization analysis of $\Phi$ that culminates in a simple sufficient
condition for geometric ergodicity that depends only on $h$.  This
result is used to prove Theorem~\ref{thm:main} in
Section~\ref{sec:proof}.  In Section~\ref{sec:ex}, we consider the
implications of Theorem~\ref{thm:main} when $h$ is a member of one of
the standard parametric families, and we also develop conditions under
which a mixture of mixing densities leads to a geometric DA Markov
chain.  Finally, the Appendix contains the definitions of the inverse
Wishart ($\mbox{IW}_d$) and matrix normal ($\mbox{N}_{p,d}$)
densities.

\section{The latent data model and the DA Markov chain}
\label{sec:DA}

In order to formally define the Markov chain that the DA algorithm
simulates, we must introduce the latent data model.  Suppose that,
conditional on $(\beta,\Sigma)$, $\{(Y_i,Z_i)\}_{i=1}^n$ are iid pairs
such that
\[
Y_i| Z_i=z_i \sim \mbox{N}_d \big( \beta^T x_i, \Sigma
/ z_i \big)
\]
\[
Z_i \sim h \;.
\]
Denote the joint density of $\{(Y_i,Z_i)\}_{i=1}^n$ by $\tilde{f}(y, z
\big| \beta, \Sigma)$.  It's easy to see that
\begin{equation*}
  \int_{\mathbb{R}_+^n} \tilde{f}(y, z \big| \beta,
  \Sigma) \, dz = f(y \big| \beta, \Sigma) \;,
\end{equation*}
where the right-hand side is the joint density of the data defined at
\eqref{eq:joint}.  Now define a (possibly improper) density on
$\mathbb{R}^{p \times d} \times {\cal S}_d \times \mathbb{R}_+^n$ as
follows
\[
\pi(\beta, \Sigma,z \big| y) = \tilde{f}(y, z \big| \beta,
\Sigma) \, \omega (\beta , \Sigma) \;,
\]
and note that
\begin{equation}
  \label{eq:invariant_density}
  \int_{\mathbb{R}^n_+} \pi(\beta,\Sigma,z|y) \, dz = f(y \big|
  \beta, \Sigma) \, \omega (\beta , \Sigma) \;.
\end{equation}
It follows that $\pi(\beta, \Sigma,z \big| y)$ is a proper density if
and only if the posterior distribution is proper.  Importantly,
whether $\pi(\beta, \Sigma,z \big| y)$ is proper or not, conditions
($N1$) and ($N2$) guarantee that the corresponding ``conditional''
densities, $\pi(\beta,\Sigma|z,y)$ and $\pi(z|\beta,\Sigma,y)$, are
well-defined.  Indeed, $\pi(\beta,\Sigma|z,y) = \pi(\beta|\Sigma,z,y)
\pi(\Sigma|z,y)$, and routine calculations show that
$\pi(\beta|\Sigma,z,y)$ is a matrix normal density, and
$\pi(\Sigma|z,y)$ is an inverse Wishart density.  (The precise forms
of these densities can be gleaned from the algorithm stated in the
Introduction.)  It is also straightforward to show that
\[
\pi(z|\beta,\Sigma,y) = \prod_{i=1}^n \psi(z_i;r_i) \;,
\]
where $r_i = \big( \beta^T x_i - y_i \big)^T \Sigma^{-1} \big( \beta^T
x_i - y_i \big)$ for $i=1,2,\dots,n$.

The DA algorithm simulates the Markov chain $\Phi =
\{(\beta_m,\Sigma_m)\}_{m=0}^\infty$, whose state space is $\X :=
\mathbb{R}^{p \times d} \times {\cal S}_d$, and whose Markov
transition density (Mtd)
\begin{equation*}
  k \big( \beta,\Sigma \big| \tilde{\beta},\tilde{\Sigma} \big) =
  \int_{\mathbb{R}^n_+} \pi(\beta,\Sigma|z,y) \,
  \pi(z|\tilde{\beta},\tilde{\Sigma},y) \, dz \;.
\end{equation*}
We suppress dependence on the data, $y$, since it is fixed throughout.
Note that $\pi(\beta,\Sigma|z,y)$ and $\pi(z|\beta,\Sigma,y)$ are both
strictly positive on $\Z = \{z \in \mathbb{R}_+ : h(z) > 0\}$, and
$\Z$ has positive Lebesgue measure.  Therefore, $k \big( \beta,\Sigma
\big| \tilde{\beta},\tilde{\Sigma} \big)$ is strictly positive on $\X
\times \X$, which implies irreducibility and aperiodicity.  It's easy
to see that \eqref{eq:invariant_density} is an invariant density for
$\Phi$.  Consequently, if the posterior is proper, then the chain's
invariant density is the target posterior, $\pi^*(\beta,\Sigma|y)$,
and the chain is positive recurrent.  In fact, it is positive Harris
recurrent (because $k$ is strictly positive).

We end this section by describing an interesting simplification that
occurs in the special case where $a=(d+1)/2$ and $n=p+d$.
\citet{roy:hobe:2010} show that when $a=(d+1)/2$, we have
\[
\pi(z|y) = \int_{{\cal S}_d} \int_{\mathbb{R}^{p \times d}}
\pi(\beta,\Sigma,z|y) \, d\beta \, d\Sigma \propto \frac{\prod_{i=1}^n
  h(z_i)}{|Q|^{\frac{d}{2}} |\Omega|^{\frac{n-p-d}{2}} | \Lambda^T
  Q^{-1} \Lambda|^{\frac{n-p}{2}}} \;,
\]
which is not necessarily integrable in $z$, because the posterior is
not necessarily proper \citep[see, e.g.,][]{fern:stee:1999}.  However,
when $n=p+d$, $\Lambda$ is square and non-singular (because of
$(N1)$), and we have the stunningly simple formula
\[
\pi(z|y) \propto \prod_{i=1}^n h(z_i)  \;.
\]
Consequently, when $a=(d+1)/2$ and $n=p+d$, the posterior distribution
is proper, and if we are able to draw from the mixing density, $h$,
then we can make an exact draw from the posterior density by drawing
sequentially from $\pi(z|y)$, $\pi(\Sigma|z,y)$, and
$\pi(\beta|\Sigma,z,y)$, and then ignoring $z$.

In the next section, we develop a condition on $h$ that implies
geometric ergodicity of the DA Markov chain, $\Phi$.

\section{A Drift and Minorization Analysis of $\Phi$}
\label{sec:d_m}

Here we analyze the DA Markov chain via drift and minorization
arguments.  For background on these techniques, see
\citet{jone:hobe:2001} and \citet{robe:rose:2004}.  Suppose that the
posterior distribution is proper.  Then the DA Markov chain $\Phi$ is
\textit{geometrically ergodic} if there exist $M: \X \rightarrow
[0,\infty)$ and $\rho \in [0,1)$ such that, for all $m \in
\mathbb{N}$,
\begin{equation}
  \label{eq:ge}
  \int_{{\cal S}_d} \int_{\mathbb{R}^{p \times d}} \Big| k^m \big(
  \beta,\Sigma \big| \tilde{\beta},\tilde{\Sigma} \big) -
  \pi^*(\beta,\Sigma|y) \Big| \, d\beta \, d\Sigma \le
  M(\tilde{\beta},\tilde{\Sigma}) \, \rho^m \;,
\end{equation}
where $k^m$ is the $m$-step Mtd.  The quantity on the left-hand side
of \eqref{eq:ge} is, of course, the total variation distance between
the posterior distribution and the distribution of
$(\beta_m,\Sigma_m)$ conditional on $(\beta_0,\Sigma_0) =
(\tilde{\beta},\tilde{\Sigma})$.  Here is the main result of this
section.

\begin{proposition}
  \label{prop:drift_smn}
  Let $h$ be a mixing density that satisfies condition ${\cal M}$.
  Suppose that there exist $\lambda \in \big[ 0, \frac{1}{n-p+2a-1}
  \big)$ and $L \in \mathbb{R}$ such that
\begin{equation}
   \label{eq:key}
   \frac{\int_0^\infty u^{\frac{d-2}{2}} \, e^{ -\frac{s u}{2}} \, h(u)
     \, du}{\int_0^\infty u^{\frac{d}{2}} \, e^{ -\frac{s u}{2}} \, h(u)
     \, du} \le \lambda s + L
\end{equation}
for every $s \ge 0$.  Then the posterior distribution is proper, and
the DA Markov chain is geometrically ergodic.
\end{proposition}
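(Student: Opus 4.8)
\emph{Proof strategy.}\ The plan is to establish a geometric drift condition for $\Phi$ together with a matching minorization condition on the sublevel sets of the drift function, and then to invoke the standard machinery \citep[see, e.g.,][]{jone:hobe:2001,robe:rose:2004} that turns these two ingredients, for a $\phi$-irreducible and aperiodic chain, into geometric ergodicity. Recall that $\Phi$ has a strictly positive Mtd, hence is $\phi$-irreducible (with respect to Lebesgue measure on $\X$) and aperiodic, and that $f(y\,|\,\beta,\Sigma)\,\omega(\beta,\Sigma)$ is an invariant measure for it. Since a geometric drift toward a small set forces positive Harris recurrence, and the invariant measure of a recurrent $\phi$-irreducible chain is unique up to a multiplicative constant while that of a positive recurrent chain is finite, the same drift/minorization pair also yields $m(y)<\infty$, i.e.\ propriety of the posterior (after which the object $\pi^*$ appearing in \eqref{eq:ge} is well-defined). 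Thus both conclusions of the proposition will follow at once.

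\emph{The drift condition.}\ For the drift function I would take
\[
V(\beta,\Sigma)=\sum_{i=1}^n \big(\beta^T x_i-y_i\big)^T\Sigma^{-1}\big(\beta^T x_i-y_i\big)=\sum_{i=1}^n r_i\;,
\]
the sum of the current residual quadratic forms. Conditioning on the latent draw $z$ from Step~1, Steps~2 and 3 are an inverse Wishart draw for $\Sigma_{m+1}$ followed by a matrix normal draw for $\beta_{m+1}$, so the first and second moment formulas for these two distributions (recorded in the Appendix) give, after a short computation,
\[
\Ex\big[V(\beta_{m+1},\Sigma_{m+1})\,\big|\,z\big]=(n-p+2a-d-1)\,\trace\!\big(W^{-1}R^TR\big)+d\,\trace\!\big(\Omega X^T X\big)\;,
\]
where $R=y-X\mu$ and $W=y^TQ^{-1}y-\mu^T\Omega^{-1}\mu=R^TQ^{-1}R$; note that $(N2)$ is precisely the condition making the inverse Wishart degrees of freedom $n-p+2a-d-1$ admissible, and $(N1)$ guarantees that $X$ and $R$ have full column rank, so that $\Omega$ and $W$ are invertible. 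The crucial step is the following linear-algebra bound: since $Q^{-1/2}R(R^TQ^{-1}R)^{-1/2}$ and $Q^{-1/2}X(X^TQ^{-1}X)^{-1/2}$ have orthonormal columns, one gets $\trace(W^{-1}R^TR)=\sum_{i=1}^n z_i^{-1}\ell^R_i$ and $\trace(\Omega X^TX)=\sum_{i=1}^n z_i^{-1}\ell^X_i$, where $(\ell^R_i)_{i=1}^n$ and $(\ell^X_i)_{i=1}^n$ are the diagonal entries of a rank-$d$ and a rank-$p$ orthogonal projection matrix, respectively, so each lies in $[0,1]$; combining the two,
\[
\Ex\big[V(\beta_{m+1},\Sigma_{m+1})\,\big|\,z\big]=\sum_{i=1}^n z_i^{-1}\Big[(n-p+2a-d-1)\,\ell^R_i+d\,\ell^X_i\Big]\le (n-p+2a-1)\sum_{i=1}^n z_i^{-1}\;.
\]
Finally, taking the expectation over $z\sim\prod_{i=1}^n\psi(\cdot\,;r_i)$ and observing that the left-hand side of \eqref{eq:key} evaluated at $s=r_i$ is exactly $\Ex_{\psi(\cdot;r_i)}\big[U^{-1}\big]=\Ex\big[z_i^{-1}\,\big|\,\beta,\Sigma\big]$, hypothesis \eqref{eq:key} yields
\[
\Ex\big[V(\beta_{m+1},\Sigma_{m+1})\,\big|\,\beta,\Sigma\big]\le (n-p+2a-1)\,\lambda\,V(\beta,\Sigma)+(n-p+2a-1)\,n\,L\;,
\]
which is a geometric drift condition because $(N2)$ makes $n-p+2a-1$ positive and, by hypothesis, $\lambda<1/(n-p+2a-1)$, so the contraction coefficient $(n-p+2a-1)\lambda$ lies in $[0,1)$.

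\emph{The minorization.}\ Let $C_K=\{(\beta,\Sigma)\in\X:V(\beta,\Sigma)\le K\}$. On $C_K$ every residual satisfies $0\le r_i\le K$. Since $\psi(u;s)=b(s)\,u^{d/2}e^{-su/2}h(u)$ with $b(s)^{-1}=\int_0^\infty u^{d/2}e^{-su/2}h(u)\,du\le\int_0^\infty u^{d/2}h(u)\,du<\infty$ by condition ${\cal M}$, we have $\psi(u;s)\ge g_K(u):=\big[\int_0^\infty t^{d/2}h(t)\,dt\big]^{-1}u^{d/2}e^{-Ku/2}h(u)$ for all $s\in[0,K]$ and all $u>0$, where $g_K$ is a fixed nonnegative integrable function with $\int_0^\infty g_K(u)\,du>0$. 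Hence, for every measurable $A\subseteq\X$ and every $(\tilde\beta,\tilde\Sigma)\in C_K$,
\[
\int_A k\big(\beta,\Sigma\,\big|\,\tilde\beta,\tilde\Sigma\big)\,d\beta\,d\Sigma=\int_A\!\!\int_{\mathbb{R}_+^n}\pi(\beta,\Sigma\,|\,z,y)\prod_{i=1}^n\psi(z_i;\tilde r_i)\,dz\,d\beta\,d\Sigma\ \ge\ \varepsilon_K\,\eta(A)\;,
\]
where $\varepsilon_K=\big(\int_0^\infty g_K(u)\,du\big)^n>0$ and $\eta(\cdot)$ is the probability measure with density proportional to $\int_{\mathbb{R}_+^n}\pi(\,\cdot\,|\,z,y)\prod_i g_K(z_i)\,dz$ (this is a probability density since $\pi(\,\cdot\,|\,z,y)$ integrates to one for each $z$). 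So each $C_K$ is a small set; together with the facts that $V$ is finite on all of $\X$ and $\bigcup_K C_K=\X$, the drift/minorization pair is complete, and the cited machinery delivers geometric ergodicity (and, as noted above, propriety).

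\emph{Main obstacle.}\ The heart of the argument is the drift computation, and within it the linear-algebra bound that collapses $\Ex[V(\beta_{m+1},\Sigma_{m+1})\,|\,z]$ to $\big(\sum_i z_i^{-1}\big)$ times coefficients summing to \emph{exactly} $n-p+2a-1$. A cruder estimate (e.g.\ bounding each trace by $d$ or $p$ times $\max_i z_i^{-1}$) would produce a larger constant and hence a more restrictive threshold on $\lambda$; getting this constant sharp is what makes $\lambda<1/(n-p+2a-1)$ the natural hypothesis. Everything downstream---feeding \eqref{eq:key} into the drift inequality, verifying the minorization, and quoting the drift-plus-minorization theorem together with the recurrence-implies-propriety remark---is routine.
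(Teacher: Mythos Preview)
Your proposal is correct and follows essentially the same route as the paper: the same drift function $V(\beta,\Sigma)=\sum_i r_i$, the same minorization on sublevel sets of $V$ via the lower bound $\psi(u;s)\ge b(0)^{-1}u^{d/2}e^{-Ku/2}h(u)$, and the same appeal to drift-plus-minorization machinery, with propriety deduced from positive recurrence. The only substantive difference is that the paper cites \citet{roy:hobe:2010} for the inequality $\Ex[V(\beta_{m+1},\Sigma_{m+1})\,|\,z]\le (n-p+2a-1)\sum_i z_i^{-1}$, whereas you supply the underlying linear-algebra argument (the projection-matrix diagonals $\ell_i^R,\ell_i^X\in[0,1]$) explicitly; your derivation of that bound is correct and in fact matches what Roy and Hobert do.
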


\begin{proof}
  We will prove the result by establishing a drift condition and an
  associated minorization condition, as in \pcite{rose:1995} Theorem
  12.  We begin by noting that the drift and minorization technique is
  applicable whether the posterior distribution is proper or not.  (In
  more technical terms, it is not necessary to demonstrate that the
  Markov chain under study is positive recurrent before applying the
  technique.)  Moreover, the DA Markov chain cannot be geometrically
  ergodic if the posterior is improper (since the corresponding chain
  is not positive recurrent).  Hence, conditions that imply geometric
  ergodicity of the DA Markov chain simultaneously imply propriety of
  the corresponding posterior distribution.

  Our drift function, $V: \mathbb{R}^{p \times d} \times {\cal S}_d
  \rightarrow \mathbb{R}_+$, is as follows
\begin{equation*}
  V(\beta,\Sigma) = \sum_{i=1}^n \big(y_i - \beta^T x_i)^T \Sigma^{-1}
  \big(y_i - \beta^T x_i) \;.
\end{equation*}

\noindent {\bf Part I: Minorization}.  Fix $l>0$ and define
\[
B_l = \big \{ (\beta,\Sigma) : V(\beta,\Sigma) \le l \big \} \;.
\]
We will construct $\epsilon \in (0,1)$ and a density function $f^*:
\mathbb{R}^{p \times d} \times {\cal S}_d \rightarrow [0,\infty)$
(both of which depend on $l$) such that, for all
$(\tilde{\beta},\tilde{\Sigma}) \in B_l$,
\[
k(\beta,\Sigma | \tilde{\beta},\tilde{\Sigma}) \ge \epsilon
f^*(\beta,\Sigma) \;.
\]
This is the minorization condition.  We note that it suffices to
construct $\epsilon \in (0,1)$ and a density function $\hat{f}:
\mathbb{R}_+^n \rightarrow [0,\infty)$ such that, for all
$(\tilde{\beta},\tilde{\Sigma}) \in B_l$,
\[
\pi(z|\tilde{\beta},\tilde{\Sigma},y) \ge \epsilon \hat{f}(z) \;.
\]
Indeed, if such an $\hat{f}$ exists, then for all
$(\tilde{\beta},\tilde{\Sigma}) \in B_l$, we have
\begin{equation*}
  k \big( \beta,\Sigma \big| \tilde{\beta},\tilde{\Sigma} \big) =
  \int_{\mathbb{R}^n_+} \pi(\beta,\Sigma|z,y) \,
  \pi(z|\tilde{\beta},\tilde{\Sigma},y) \, dz \ge \epsilon
  \int_{\mathbb{R}^n_+} \pi(\beta,\Sigma|z,y) \, \hat{f}(z) \, dz
  = \epsilon f^*(\beta,\Sigma) \;.
\end{equation*}
We now build $\hat{f}$.  Define $\tilde{r}_i = \big(y_i -
\tilde{\beta}^T x_i)^T \tilde{\Sigma}^{-1} \big(y_i - \tilde{\beta}^T
x_i)$, and note that
\begin{equation*}
  \pi(z|\tilde{\beta},\tilde{\Sigma},y) = \prod_{i=1}^n
  \psi(z_i;\tilde{r}_i) = \prod_{i=1}^n b(\tilde{r}_i) \,
  z^{\frac{d}{2}}_i \, e^{ -\frac{\tilde{r}_i z_i}{2}} \, h(z_i) \;.
\end{equation*}
Now, for any $s \ge 0$, we have
\[
b(s) = \frac{1}{\int_0^\infty u^{\frac{d}{2}} \, e^{ -\frac{s u}{2}}
  \, h(u) \, du} \ge \frac{1}{\int_0^\infty u^{\frac{d}{2}} \, h(u) \,
  du} \;.
\]
By definition, if $(\tilde{\beta},\tilde{\Sigma}) \in B_l$, then
$\sum_{i=1}^n \tilde{r}_i \le l$, which implies that $\tilde{r}_i \le
l$ for each $i=1,\dots,n$.  Thus, if $(\tilde{\beta},\tilde{\Sigma})
\in B_l$, then for each $i=1,\dots,n$, we have
\[
z^{\frac{d}{2}}_i \, e^{ -\frac{\tilde{r}_i z_i}{2}} \, h(z_i) \ge
z^{\frac{d}{2}}_i \, e^{ -\frac{l z_i}{2}} \, h(z_i) \;.
\]
Therefore,
\begin{align*}
  \pi(z|\tilde{\beta},\tilde{\Sigma},y) & \ge \bigg[ \int_0^\infty
  u^{\frac{d}{2}} \, h(u) \, du \bigg]^{-n} \prod_{i=1}^n
  z^{\frac{d}{2}}_i \, e^{- \frac{l z_i}{2}} \, h(z_i) \\ & = \bigg[
  \frac{\int_0^\infty u^{\frac{d}{2}} \, e^{- \frac{l u}{2}} \, h(u)
    \, du}{\int_0^\infty u^{\frac{d}{2}} \, h(u) \, du} \bigg]^n
  \prod_{i=1}^n \frac{z^{\frac{d}{2}}_i \, e^{- \frac{l z_i}{2}} \,
    h(z_i)}{\int_0^\infty u^{\frac{d}{2}} \, e^{- \frac{l u}{2}} \,
    h(u) \, du} \\ & := \epsilon \hat{f}(z) \;.
\end{align*}
Hence, our minorization condition is established.

\noindent {\bf Part II: Drift}.  To establish the required drift
condition, we need to bound the expectation of
$V(\beta_{m+1},\Sigma_{m+1})$ given that $(\beta_m,\Sigma_m) =
(\tilde{\beta},\tilde{\Sigma})$.  This expectation is given by
\begin{align*}
  \int_{{\cal S}_d} \int_{\mathbb{R}^{p \times d}} & V(\beta,\Sigma)
  \, k(\beta,\Sigma | \tilde{\beta},\tilde{\Sigma}) \, d\beta \,
  d\Sigma
  \nonumber \\
  & = \int_{\mathbb{R}_+^n} \Bigg \{ \int_{{\cal S}_d} \bigg[
  \int_{\mathbb{R}^{p \times d}} V(\beta,\Sigma) \,
  \pi(\beta|\Sigma,z,y) \, d\beta \bigg] \, \pi(\Sigma|z,y) \, d\Sigma
  \Bigg \} \pi(z|\tilde{\beta},\tilde{\Sigma},y) \, dz \;.
\end{align*}
Calculations in \pcite{roy:hobe:2010} Section 4 show that
\begin{equation*}
  \int_{{\cal S}_d} \bigg[ \int_{\mathbb{R}^{p \times d}}
  V(\beta,\Sigma) \, \pi(\beta|\Sigma,z,y) \, d\beta \bigg] \,
  \pi(\Sigma|z,y) \, d\Sigma \le (n-p+2a-1) \sum_{i=1}^n \frac{1}{z_i} \;.
\end{equation*}
It follows from \eqref{eq:key} that
\begin{align*}
  \int_{\mathbb{R}_+^n} \Bigg \{ \int_{{\cal S}_d} \bigg[
  \int_{\mathbb{R}^{p \times d}} V(\beta,\Sigma) \,
  \pi(\beta|\Sigma,z,y) & \, d\beta \bigg] \, \pi(\Sigma|z,y) \, d\Sigma
  \Bigg \} \pi(z|\tilde{\beta},\tilde{\Sigma},y) \, dz \\ & \le
  (n-p+2a-1) \int_{\mathbb{R}_+^n} \bigg[ \sum_{i=1}^n \frac{1}{z_i}
  \bigg] \pi(z|\tilde{\beta},\tilde{\sigma},y) \, dz \\ & = (n-p+2a-1)
  \sum_{i=1}^n b(\tilde{r}_i) \int_0^\infty
  u^{\frac{d-2}{2}} \, e^{ -\frac{\tilde{r}_i u}{2}} \, h(u) \, du \\
  & \le (n-p+2a-1) \bigg(\lambda \sum_{i=1}^n \tilde{r}_i + nL \bigg)
  \\ & = \lambda (n-p+2a-1) V(\tilde{\beta},\tilde{\Sigma}) +
  (n-p+2a-1) n L \\ & = \lambda' V(\tilde{\beta},\tilde{\sigma}) + L'
  \;,
\end{align*}
where $\lambda' := \lambda (n-p+2a-1) \in [0,1)$ and $L' := (n-p+2a-1)
n L$.  Since the minorization condition holds for any $l>0$, an appeal
to \pcite{rose:1995} Theorem 12 yields the result.  This completes the
proof.
\end{proof}

\begin{remark}
  A straightforward argument shows that, if the mixing density $h(u)$
  satisfies the conditions of Proposition~\ref{prop:drift_smn}, then
  so does every member of the corresponding scale family given by
  $\frac{1}{\sigma} h \big( \frac{u}{\sigma} \big)$, for $\sigma>0$.
\end{remark}

In the next section, we parlay Proposition~\ref{prop:drift_smn} into a
proof of Theorem~\ref{thm:main}.  The key is to show that $h$
satisfies \eqref{eq:key} as long as it converges to zero at the origin
at an appropriate rate.

\section{Proof of Theorem~\ref{thm:main}}
\label{sec:proof}

In this section, we prove three corollaries, which, taken together,
constitute Theorem~\ref{thm:main}.  There is one corollary for each of
the three classes of mixing densities defined in the Introduction.

\subsection{Case I: Zero near the origin}
\label{sec:zno}

\begin{corollary}
  \label{prop:zno}
  Let $h$ be a mixing density that satisfies condition ${\cal M}$.  If
  $h$ is zero near the origin, then the posterior distribution is
  proper and the DA Markov chain is geometrically ergodic.
\end{corollary}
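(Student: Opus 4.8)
The plan is to verify that a mixing density which is zero near the origin automatically satisfies the hypothesis \eqref{eq:key} of Proposition~\ref{prop:drift_smn}, and then simply invoke that proposition. So the whole argument reduces to bounding the ratio
\[
\frac{\int_0^\infty u^{\frac{d-2}{2}} \, e^{-\frac{su}{2}} \, h(u) \, du}{\int_0^\infty u^{\frac{d}{2}} \, e^{-\frac{su}{2}} \, h(u) \, du}
\]
uniformly in $s \ge 0$.

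First I would invoke the definition: since $h$ is zero near the origin, there is a $\delta > 0$ with $h(u) = 0$ for all $u \in (0,\delta)$. Consequently, in both the numerator and the denominator the effective range of integration is $[\delta,\infty)$, where $u^{\frac{d-2}{2}} = u^{-1} u^{\frac{d}{2}} \le \delta^{-1} u^{\frac{d}{2}}$. This gives, for every $s \ge 0$,
\[
\int_0^\infty u^{\frac{d-2}{2}} \, e^{-\frac{su}{2}} \, h(u) \, du \le \frac{1}{\delta} \int_0^\infty u^{\frac{d}{2}} \, e^{-\frac{su}{2}} \, h(u) \, du \;,
\]
and since $h$ is a genuine density (hence not identically zero) while condition ${\cal M}$ makes $\int_0^\infty u^{\frac{d}{2}} h(u)\,du$ finite, the denominator is a finite, strictly positive number for every $s \ge 0$; the same domination shows the numerator is finite. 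Dividing, the ratio is bounded above by $1/\delta$ for all $s \ge 0$, so \eqref{eq:key} holds with $\lambda = 0$ and $L = 1/\delta$.

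It then remains only to check that $\lambda = 0$ lies in the required interval $\big[0, \frac{1}{n-p+2a-1}\big)$, i.e., that $n - p + 2a - 1 > 0$; this is immediate from condition $(N2)$, which gives $n - p + 2a > 2d \ge 2$. An appeal to Proposition~\ref{prop:drift_smn} then yields both propriety of the posterior and geometric ergodicity of the DA Markov chain. I do not anticipate any genuine obstacle here: the statement is essentially the easiest of the three cases, and the only points requiring a word of care are the finiteness of the numerator and the strict positivity of the denominator, both handled above.
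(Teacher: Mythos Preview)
Your proposal is correct and follows essentially the same route as the paper's proof: both use the fact that the integrals are supported on $[\delta,\infty)$ to bound the ratio by $1/\delta$, then invoke Proposition~\ref{prop:drift_smn} with $\lambda=0$. Your version is slightly more explicit about the finiteness of the numerator, the positivity of the denominator, and the verification that $n-p+2a-1>0$, but the core argument is identical.
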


\begin{proof}
  Fix $s \ge 0$, and recall that $h(u)=0$ for $u \in (0,\delta)$ for
  some $\delta>0$.  Hence,
\[
  \frac{\int_0^\infty u^{\frac{d-2}{2}} \, e^{ -\frac{s u}{2}} \, h(u)
    \, du}{\int_0^\infty u^{\frac{d}{2}} \, e^{ -\frac{s u}{2}} \,
    h(u) \, du} = \frac{\int_\delta^\infty \frac{1}{\sqrt{u}}
    u^{\frac{d-1}{2}} \, e^{ -\frac{s u}{2}} \, h(u) \,
    du}{\int_\delta^\infty \sqrt{u} \, u^{\frac{d-1}{2}} \, e^{
      -\frac{s u}{2}} \, h(u) \, du} \le
  \frac{\frac{1}{\sqrt{\delta}} \int_\delta^\infty u^{\frac{d-1}{2}}
    \, e^{ -\frac{s u}{2}} \, h(u) \, du}{\sqrt{\delta}
    \int_\delta^\infty u^{\frac{d-1}{2}} \, e^{ -\frac{s u}{2}} \,
    h(u) \, du} = \frac{1}{\delta} \;.
\]
Thus, the conditions of Proposition~\ref{prop:drift_smn} are satisfied
and the proof is complete.
\end{proof}

\subsection{Case II: Polynomial near the origin}
\label{sec:pno}

Fix $\lambda \in [0,\infty)$ and let ${\cal A}(\lambda)$ denote the
set of mixing densities, $h$, for which there exists a constant,
$k_\lambda$, such that
\begin{equation*} 
  \frac{\int_0^\infty \frac{1}{\sqrt{u}} \, e^{ -\frac{s u}{2}} \, h(u)
    \, du}{\int_0^\infty \sqrt{u} \, e^{ -\frac{s u}{2}} \, h(u)
    \, du} \le \lambda s + k_\lambda
\end{equation*}
for every $s \ge 0$.  For each mixing density, $h$, we define
\[
\lambda_h = \inf \big\{ \lambda \in [0,\infty): h \in {\cal
  A}(\lambda) \big \} \;.
\]
If $h$ is not in ${\cal A}(\lambda)$ for any $\lambda \in [0,\infty)$,
then we set $\lambda_h = \infty$.  Here is an example.  Suppose that
$h$ is a $\mbox{Gamma}(\alpha,1)$ density.  If $\alpha>1/2$, then
routine calculations show that
\begin{equation}
  \label{eq:gl}
  \frac{\int_0^\infty \frac{1}{\sqrt{u}} \, e^{ -\frac{s u}{2}} \, h(u)
    \, du}{\int_0^\infty \sqrt{u} \, e^{ -\frac{s u}{2}} \, h(u) \, du}
  = \frac{1}{2\alpha-1} s + \frac{2}{2\alpha-1} \;.
\end{equation}
So, in this case, $\lambda_h = \frac{1}{2\alpha-1}$.  On the other
hand, if $\alpha \in (0,1/2]$, then $\lambda_h = \infty$.

Our next result shows that $\lambda_h$ is determined solely by the
behavior of the density $h$ near $0$.

\begin{lemma}
  \label{lem:behnzr}
  Suppose that $h$ and $\tilde{h}$ are two mixing densities that are
  both strictly positive in a neighborhood of zero.  If
\[
\lim_{u \rightarrow 0} \frac{h(u)}{\tilde{h}(u)} \in (0,\infty) \;,
\]
then, $\lambda_h = \lambda_{\tilde{h}}$.
\end{lemma}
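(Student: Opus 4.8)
The plan is to reduce the claim to a comparison of the four integrals appearing in the definition of $\mathcal{A}(\lambda)$, using the hypothesis $\lim_{u\to 0} h(u)/\tilde h(u) =: \kappa \in (0,\infty)$ to control the ``near-zero'' parts of these integrals and an elementary argument to control the ``away-from-zero'' parts. By symmetry of the statement in $h$ and $\tilde h$, it suffices to show $\lambda_h \le \lambda_{\tilde h}$; the reverse inequality then follows by swapping the roles of $h$ and $\tilde h$ (noting $\lim_{u\to 0}\tilde h(u)/h(u) = 1/\kappa \in (0,\infty)$). And for that it is enough to show: for every $\lambda > \lambda_{\tilde h}$, we have $h \in \mathcal{A}(\lambda)$, i.e.\ there is a finite $k_\lambda$ with
\[
\frac{\int_0^\infty u^{-1/2}\, e^{-su/2}\, h(u)\, du}{\int_0^\infty u^{1/2}\, e^{-su/2}\, h(u)\, du}
\le \lambda s + k_\lambda
\]
for all $s \ge 0$.

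The key step is a splitting argument. Fix $\varepsilon > 0$ small enough that on $(0,\varepsilon)$ both densities are strictly positive and $(1-\theta)\kappa \le h(u)/\tilde h(u) \le (1+\theta)\kappa$ for a small $\theta \in (0,1)$ to be chosen; such an $\varepsilon$ exists by the limit hypothesis. Write each of the four integrals as $\int_0^\varepsilon + \int_\varepsilon^\infty$. On $(0,\varepsilon)$ I can replace $h$ by $\tilde h$ up to the multiplicative factors $(1\pm\theta)\kappa$; on $[\varepsilon,\infty)$ I bound $e^{-su/2} \le e^{-s\varepsilon/2}$ in the numerator's tail and $e^{-su/2} \ge$ (nothing useful directly) --- instead, for the tail of the numerator I use $u^{-1/2} \le \varepsilon^{-1/2}$ and $e^{-su/2}\le 1$ to get $\int_\varepsilon^\infty u^{-1/2} e^{-su/2} h(u)\,du \le \varepsilon^{-1/2}\int_\varepsilon^\infty h(u)\,du \le \varepsilon^{-1/2}$, a constant. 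For the denominator I keep only the near-zero piece: $\int_0^\infty u^{1/2} e^{-su/2} h(u)\,du \ge \int_0^\varepsilon u^{1/2} e^{-su/2} h(u)\,du \ge (1-\theta)\kappa \int_0^\varepsilon u^{1/2} e^{-su/2}\tilde h(u)\,du$. Assembling these, the ratio for $h$ is bounded by a constant multiple (close to $1$ when $\theta$ is small) of the ratio for $\tilde h$ restricted to $(0,\varepsilon)$, plus a term of the form $C\big/\int_0^\varepsilon u^{1/2} e^{-su/2}\tilde h(u)\,du$.

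What makes this go through is that the restricted ratio $\int_0^\varepsilon u^{-1/2} e^{-su/2}\tilde h(u)\,du \big/ \int_0^\varepsilon u^{1/2} e^{-su/2}\tilde h(u)\,du$ differs from the full ratio for $\tilde h$ by a bounded amount (by the same tail estimates applied to $\tilde h$), so it is $\le \lambda s + k'$ for suitable finite $k'$ whenever $\lambda > \lambda_{\tilde h}$; and the leftover term $C\big/\int_0^\varepsilon u^{1/2} e^{-su/2}\tilde h(u)\,du$ grows at most \emph{linearly} in $s$. This last point is the main obstacle and deserves care: I need a lower bound of the form $\int_0^\varepsilon u^{1/2} e^{-su/2}\tilde h(u)\,du \ge c/(s+1)$ for large $s$. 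Since $\tilde h$ is strictly positive near $0$, pick $\delta \in (0,\varepsilon)$ and $m>0$ with $\tilde h(u)\ge m$ on $(0,\delta)$; then $\int_0^\varepsilon u^{1/2} e^{-su/2}\tilde h(u)\,du \ge m\int_0^\delta u^{1/2} e^{-su/2}\,du$, and by the substitution $t = su$ this is $m s^{-3/2}\int_0^{s\delta} t^{1/2} e^{-t/2}\,dt \ge m s^{-3/2} \cdot c_0$ for $s$ bounded away from $0$, which is $\gtrsim s^{-3/2}$, comfortably controlling $C\big/\int_0^\varepsilon(\cdots)$ by $C' s^{3/2}$ --- wait, that is super-linear.

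Let me fix the last paragraph: the cleaner route is to not isolate a standalone additive constant but to carry the denominator through. Concretely, write
\[
\frac{\int_0^\infty u^{-1/2} e^{-su/2} h\, du}{\int_0^\infty u^{1/2} e^{-su/2} h\, du}
\le \frac{(1+\theta)\kappa \int_0^\varepsilon u^{-1/2} e^{-su/2}\tilde h\, du + \varepsilon^{-1/2}}{(1-\theta)\kappa \int_0^\varepsilon u^{1/2} e^{-su/2}\tilde h\, du},
\]
and split the right side as a sum of two fractions: the first is $\tfrac{1+\theta}{1-\theta}$ times the \emph{restricted} ratio for $\tilde h$, which (by the tail estimate for $\tilde h$, now used in reverse: the restricted denominator is at least the full denominator minus $\varepsilon^{1/2}$, hence comparable) is $\le \tfrac{1+\theta}{1-\theta}(\lambda' s + k')$ for any $\lambda' > \lambda_{\tilde h}$ once $s$ is large, and on the compact range of small $s$ is bounded; the second fraction is $\varepsilon^{-1/2}/\big((1-\theta)\kappa \int_0^\varepsilon u^{1/2} e^{-su/2}\tilde h\, du\big)$, and here the required bound is precisely that this is $\le \lambda'' s + k''$, i.e.\ $\int_0^\varepsilon u^{1/2} e^{-su/2}\tilde h(u)\,du \ge c/(s+1)$ for all $s \ge 0$. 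Establishing \emph{that} lower bound is the one genuine estimate in the proof: using $\tilde h \ge m$ on $(0,\delta)$ and the substitution $t=su$,
\[
\int_0^\varepsilon u^{1/2} e^{-su/2}\tilde h(u)\, du \;\ge\; m\int_0^\delta u^{1/2} e^{-su/2}\, du \;=\; \frac{m}{s^{3/2}}\int_0^{s\delta} t^{1/2} e^{-t/2}\, dt,
\]
which for $s \ge 1/\delta$ is $\ge (m/s^{3/2})\int_0^1 t^{1/2}e^{-t/2}\,dt =: c_1 s^{-3/2} \ge c_1 s^{-3/2}$, and this is $\ge c_1 (s+1)^{-3/2}$, so $\varepsilon^{-1/2}/\big(c_1' (s+1)^{-3/2}\big) = c_2 (s+1)^{3/2}$ --- super-linear, so this crude bound is \emph{not} enough and one must instead keep the full denominator: the correct observation is that the second fraction is bounded by $\varepsilon^{-1/2}$ divided by the \emph{full} denominator $\int_0^\infty u^{1/2} e^{-su/2} h(u)\,du$ (up to the factor $(1-\theta)\kappa$ and the tail correction), and by condition $\mathcal{M}$ together with the definition of $\mathcal{A}(\lambda_{\tilde h}+1)$ applied to $\tilde h$, the reciprocal of that denominator grows at most linearly in $s$ --- because $1/\!\int u^{1/2}e^{-su/2}\tilde h\,du$ appearing as $b(s)$ satisfies exactly such a bound whenever $\tilde h \in \mathcal{A}(\lambda)$ for some finite $\lambda$, which holds iff $\lambda_{\tilde h}<\infty$. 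Thus the argument naturally splits into the case $\lambda_{\tilde h} = \infty$ (where $\lambda_h = \infty$ is immediate by the symmetric version of the same splitting, since a linear bound for $h$ would force one for $\tilde h$) and the case $\lambda_{\tilde h} < \infty$ (handled as above). I expect reconciling these two regimes cleanly, and pinning down the ``$1/$denominator grows linearly'' fact, to be the delicate bookkeeping; everything else is the routine split-and-estimate.
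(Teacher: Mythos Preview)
Your overall strategy matches the paper's: reduce by symmetry to $\lambda_h\le\lambda_{\tilde h}$, fix $\lambda>\lambda_{\tilde h}$, split the integrals at some small $\eta$, and compare $h$ with $\tilde h$ on $(0,\eta)$ via the limit hypothesis. The gap is in your handling of the tail term. You bound $\int_\eta^\infty u^{-1/2}e^{-su/2}h(u)\,du$ by the constant $\eta^{-1/2}$ and then try to show that $\eta^{-1/2}\big/\int_0^\eta u^{1/2}e^{-su/2}\tilde h(u)\,du$ grows at most linearly in $s$. It does not: for any density that is polynomial near the origin with power $c>-\tfrac12$ (precisely the regime where $\lambda_{\tilde h}<\infty$), one has $\int_0^\eta u^{1/2}e^{-su/2}\tilde h(u)\,du\asymp s^{-(c+3/2)}$ as $s\to\infty$, so its reciprocal grows like $s^{c+3/2}$, which is super-linear. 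Your proposed rescue, that membership in $\mathcal A(\lambda)$ forces the reciprocal of the denominator to grow linearly, is therefore false; the $\mathcal A(\lambda)$ inequality controls a \emph{ratio} of two integrals, not the denominator by itself.

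The missing idea, which the paper uses, is to never isolate the tail numerator as a standalone constant. Instead observe that on $[\eta,\infty)$ one has $u^{-1/2}\le \eta^{-1}u^{1/2}$, so
\[
\frac{\int_\eta^\infty u^{-1/2}e^{-su/2}h(u)\,du}{\int_{\mathbb R_+} u^{1/2}e^{-su/2}h(u)\,du}
\;\le\;\frac{1}{\eta}\cdot\frac{\int_\eta^\infty u^{1/2}e^{-su/2}h(u)\,du}{\int_{\mathbb R_+} u^{1/2}e^{-su/2}h(u)\,du}
\;\le\;\frac{1}{\eta},
\]
a genuine constant independent of $s$. For the main piece the paper couples two choices: it picks $\eta$ so that $C_{2,\eta}/C_{1,\eta}=\sqrt{\lambda/\lambda^*}$ with $\lambda^*=(\lambda_{\tilde h}+\lambda)/2$, and separately shows (via the same tail estimate applied to $\tilde h$) that for all large $s$ the truncated denominator $\int_0^\eta u^{1/2}e^{-su/2}\tilde h\,du$ is at least $\sqrt{\lambda^*/\lambda}$ times the full one. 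These two factors combine to give $(\lambda/\lambda^*)(\lambda^* s+k)=\lambda s+\text{const}$ for large $s$; small $s$ is handled by noting that $\int u^{-1/2}h(u)\,du<\infty$, which follows from $\tilde h\in\mathcal A(\lambda^*)$ at $s=0$ together with the near-zero comparison. With the tail handled this way, the rest of your outline goes through.
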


\begin{proof}
  Assume that $\lambda_{\tilde{h}} < \infty$.  We will show that
  $\lambda_h \leq \lambda_{\tilde{h}}$.  Fix $\lambda \in
  (\lambda_{\tilde{h}}, \infty)$ arbitrarily.  Let $\lambda^* =
  (\lambda_{\tilde{h}} + \lambda)/2$.  Since $\lim_{u \rightarrow 0}
  \frac{h(u)}{\tilde{h}(u)} \in (0, \infty)$, there exists $\eta > 0$
  such that
\begin{equation}
  \label{eq:zrbhvr1}
  C_{1, \eta} < \frac{h(u)}{\tilde{h}(u)} < C_{2, \eta} 
\end{equation}
for every $u \in (0, \eta]$, where $C_{1, \eta}, C_{2, \eta} \in
\mathbb{R}_+$ satisfy $\frac{C_{2, \eta}}{C_{1, \eta}} =
\sqrt{\frac{\lambda}{\lambda^*}} > 1$.  Also, note that for such an
$\eta$,
\begin{equation*}
  \frac{\int_\eta^\infty \sqrt{u} \, e^{ -\frac{s u}{2}} \, \tilde{h}(u)
    \, du}{\int_{\mathbb{R_+}} \sqrt{u} \, e^{ -\frac{s u}{2}} \,
    \tilde{h}(u) \, du} \le \frac{e^{ -\frac{s \eta}{2}}
    \int_\eta^\infty \sqrt{u} \, \tilde{h}(u) \, du}{\int_0^{\eta/2}
    \sqrt{u} \, e^{ -\frac{s u}{2}} \, \tilde{h}(u) \, du} \le \frac{e^{
      -\frac{s \eta}{4}} \int_\eta^\infty \sqrt{u} \, \tilde{h}(u) \,
    du}{\int_0^{\eta/2} \sqrt{u} \, \tilde{h}(u) \, du} \;.
\end{equation*}
Consequently,
\[
\frac{\int_\eta^\infty \sqrt{u} \, e^{ -\frac{s u}{2}} \, \tilde{h}(u)
  \, du}{\int_{\mathbb{R_+}} \sqrt{u} \, e^{ -\frac{s u}{2}} \,
  \tilde{h}(u) \, du} \rightarrow 0 \;\; \mbox{as $s \rightarrow
  \infty$} \;,
\]
so there exists $s_\eta > 0$ such that
\begin{equation}
  \label{eq:zrbhvr2}
  \frac{\int_0^\eta \sqrt{u} \, e^{ -\frac{s u}{2}} \, \tilde{h}(u) \,
    du}{\int_{\mathbb{R}_+} \sqrt{u} \, e^{ -\frac{s u}{2}} \,
    \tilde{h}(u) \, du} = 1 - \frac{\int_\eta^\infty \sqrt{u} \, e^{
      -\frac{s u}{2}} \, \tilde{h}(u) \, du}{\int_{\mathbb{R}_+}
    \sqrt{u} \, e^{ -\frac{s u}{2}} \, \tilde{h}(u) \, du} \geq
  \sqrt{\frac{\lambda^*}{\lambda}}
\end{equation}
for every $s \geq s_\eta$.  It follows from \eqref{eq:zrbhvr1} and
\eqref{eq:zrbhvr2} that for every $s \geq s_\eta$,
\begin{align*}
  \frac{\int_{\mathbb{R_+}} \frac{1}{\sqrt{u}} \, e^{ -\frac{s u}{2}}
    \, h(u) \, du}{\int_{\mathbb{R_+}} \sqrt{u} \, e^{ -\frac{s u}{2}}
    \, h(u) \, du} & = \frac{\int_0^\eta \frac{1}{\sqrt{u}} \, e^{
      -\frac{s u}{2}} \, h(u) \, du}{\int_{\mathbb{R_+}} \sqrt{u} \,
    e^{ -\frac{s u}{2}} \, h(u) \, du} + \frac{\int_\eta^\infty
    \frac{1}{\sqrt{u}} \, e^{ -\frac{s u}{2}} \, h(u) \,
    du}{\int_{\mathbb{R_+}} \sqrt{u} \, e^{ -\frac{s u}{2}} \, h(u) \,
    du} \\ & \le \frac{\int_0^\eta \frac{1}{\sqrt{u}} \, e^{ -\frac{s
        u}{2}} \, h(u) \, du}{\int_0^\eta \sqrt{u} \, e^{ -\frac{s
        u}{2}} \, h(u) \, du} + \frac{1}{\eta} \frac{\int_\eta^\infty
    \sqrt{u} \, e^{ -\frac{s u}{2}} \, h(u) \, du}{\int_{\mathbb{R_+}}
    \sqrt{u} \, e^{ -\frac{s u}{2}} \, h(u) \, du} \\ & \le
  \frac{C_{2,\eta}}{C_{1,\eta}} \frac{\int_0^\eta \frac{1}{\sqrt{u}}
    \, e^{ -\frac{s u}{2}} \, \tilde{h}(u) \, du}{\int_0^\eta \sqrt{u}
    \, e^{ -\frac{s u}{2}} \, \tilde{h}(u) \, du} + \frac{1}{\eta} \\
  & \le \sqrt{\frac{\lambda}{\lambda^*}}
  \sqrt{\frac{\lambda}{\lambda^*}} \frac{\int_0^\eta
    \frac{1}{\sqrt{u}} \, e^{ -\frac{s u}{2}} \, \tilde{h}(u) \,
    du}{\int_{\mathbb{R}_+} \sqrt{u} \, e^{ -\frac{s u}{2}} \,
    \tilde{h}(u) \, du} + \frac{1}{\eta} \\ & \le
  \frac{\lambda}{\lambda^*} \frac{\int_{\mathbb{R}_+}
    \frac{1}{\sqrt{u}} \, e^{ -\frac{s u}{2}} \, \tilde{h}(u) \,
    du}{\int_{\mathbb{R}_+} \sqrt{u} \, e^{ -\frac{s u}{2}} \,
    \tilde{h}(u) \, du} + \frac{1}{\eta} \;.
\end{align*}
Since $\tilde{h} \in \mathcal{A}(\lambda^*)$, there exists $k$ such
that
\begin{equation}
  \label{eq:zrbhvr3}
  \frac{\int_{\mathbb{R_+}} \frac{1}{\sqrt{u}} \, e^{ -\frac{s u}{2}}
    \, h(u) \, du}{\int_{\mathbb{R_+}} \sqrt{u} \, e^{ -\frac{s u}{2}}
    \, h(u) \, du} \leq \frac{\lambda}{\lambda^*} (\lambda^* s + k) +
  \frac{1}{\eta} = \lambda s + \frac{\lambda}{\lambda^*} k +
  \frac{1}{\eta}
\end{equation}
for every $s \geq s_\eta$.  Our assumptions imply that
$\int_{\mathbb{R_+}} \frac{1}{\sqrt{u}} \, \tilde{h}(u) \, du <
\infty$.  Together with \eqref{eq:zrbhvr1}, this leads to
$\int_{\mathbb{R_+}} \frac{1}{\sqrt{u}} \, h(u) \, du < \infty$.
Then, since
\[
\sup_{s \in (0, s_\eta)} \frac{\int_{\mathbb{R_+}} \frac{1}{\sqrt{u}}
  \, e^{ -\frac{s u}{2}} \, h(u) \, du}{\int_{\mathbb{R_+}} \sqrt{u}
  \, e^{ -\frac{s u}{2}} \, h(u) \, du} \le \sup_{s \in (0, s_\eta)}
\frac{\int_{\mathbb{R_+}} \frac{1}{\sqrt{u}} \, h(u) \, du}{e^{
    -\frac{s}{2}} \int_0^1 \sqrt{u} \, h(u) \, du} \le
\frac{e^{\frac{s_\eta}{2}} \int_{\mathbb{R_+}} \frac{1}{\sqrt{u}} \,
  h(u) \, du}{\int_0^1 \sqrt{u} \, h(u) \, du} \;,
\]
it follows from \eqref{eq:zrbhvr3} that $h \in \mathcal{A}(\lambda)$.
Hence, $\lambda_h \le \lambda$.  Since $\lambda \in
(\lambda_{\tilde{h}}, \infty)$ was arbitrarily chosen, it follows that
$\lambda_h \le \lambda_{\tilde{h}}$.

Now assume that $\lambda_h < \infty$.  We can show that
$\lambda_{\tilde{h}} \le \lambda_h$ by noting that
\[
\lim_{u \rightarrow 0} \frac{h(u)}{\tilde{h}(u)} \in (0, \infty)
\Leftrightarrow \lim_{u \rightarrow 0} \frac{\tilde{h}(u)}{h(u)} \in
(0,\infty) \;,
\]
and reversing the roles of $h$ and $\tilde{h}$ in the above argument.
We have shown that $\lambda_h < \infty$ if and only if
$\lambda_{\tilde{h}} < \infty$, and when they are finite, they are
equal.
\end{proof}

\begin{corollary}
  \label{prop:pno}
  Let $h$ be a mixing density that satisfies condition ${\cal M}$.  If
  $h$ is polynomial near the origin with power $c >
  \frac{n-p+2a-d-1}{2}$, then the posterior distribution is proper and
  the DA Markov chain is geometrically ergodic.
\end{corollary}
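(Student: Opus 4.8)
The plan is to deduce the corollary from Proposition~\ref{prop:drift_smn} by verifying its hypothesis~\eqref{eq:key}, and to verify~\eqref{eq:key} by translating it into a statement about the family $\mathcal{A}(\lambda)$ defined just above Lemma~\ref{lem:behnzr}. The bridge is a simple reweighting of the mixing density: let $g$ be the mixing density proportional to $u^{\frac{d-1}{2}}\,h(u)$. For every $s\ge 0$ the normalizing constant of $g$ cancels in the ratio, so
\[
\frac{\int_0^\infty u^{\frac{d-2}{2}}\, e^{-\frac{su}{2}}\,h(u)\,du}{\int_0^\infty u^{\frac{d}{2}}\, e^{-\frac{su}{2}}\,h(u)\,du} = \frac{\int_0^\infty \frac{1}{\sqrt u}\, e^{-\frac{su}{2}}\,g(u)\,du}{\int_0^\infty \sqrt u\, e^{-\frac{su}{2}}\,g(u)\,du}\;.
\]
Consequently $h$ satisfies~\eqref{eq:key} with constants $\lambda$ and $L$ if and only if $g\in\mathcal{A}(\lambda)$ (with $L=k_\lambda$), so it is enough to prove that $\lambda_g<\frac{1}{n-p+2a-1}$.

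Before doing so I would record that $g$ really is a mixing density. Over $[1,\infty)$, $u^{\frac{d-1}{2}}h(u)\le u^{\frac{d}{2}}h(u)$, which is integrable by condition $\mathcal{M}$; over $(0,1]$, $h$ is polynomial near the origin with power $c>-1$, so $u^{\frac{d-1}{2}}h(u)$ is of order $u^{c+\frac{d-1}{2}}$, whose exponent exceeds $-1$. Thus $g$ is well-defined, is strictly positive near $0$ (because $h$ is), and is polynomial near the origin with power $c':=c+\frac{d-1}{2}$.

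To compute $\lambda_g$, let $\tilde g$ be the $\mbox{Gamma}(\alpha,1)$ density with $\alpha=c'+1=c+\frac{d+1}{2}$; since $\tilde g$ is polynomial near the origin with power $\alpha-1=c'$, we have $\lim_{u\to 0}g(u)/\tilde g(u)\in(0,\infty)$, so Lemma~\ref{lem:behnzr} gives $\lambda_g=\lambda_{\tilde g}$. Now $(N2)$ forces $n-p+2a>2d$, so the hypothesis gives $c>\frac{n-p+2a-d-1}{2}>\frac{d-1}{2}\ge 0$, whence $\alpha=c+\frac{d+1}{2}>\frac{1}{2}$; the computation in~\eqref{eq:gl} then yields $\lambda_{\tilde g}=\frac{1}{2\alpha-1}=\frac{1}{2c+d}$, so $\lambda_g=\frac{1}{2c+d}$. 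Since $(N2)$ also gives $n-p+2a-1>0$, the hypothesis $c>\frac{n-p+2a-d-1}{2}$ is equivalent to $2c+d>n-p+2a-1$, i.e. to $\lambda_g<\frac{1}{n-p+2a-1}$. Picking any $\lambda\in\big(\lambda_g,\frac{1}{n-p+2a-1}\big)$ and using that $\mathcal{A}(\mu)\subseteq\mathcal{A}(\lambda)$ for $\mu\le\lambda$, we get $g\in\mathcal{A}(\lambda)$; hence~\eqref{eq:key} holds with this $\lambda$ and $L=k_\lambda$, and Proposition~\ref{prop:drift_smn} delivers both propriety of the posterior and geometric ergodicity of the DA chain.

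The only place demanding real care is the arithmetic linking the exponents: the factor $u^{\frac{d-1}{2}}$ shifts the polynomial power of $g$ by $\frac{d-1}{2}$ relative to $h$, which turns the Gamma denominator $2\alpha-1$ from $2c+1$ into $2c+d$, and it is precisely this shift that makes the resulting threshold on $c$ coincide with $\frac{n-p+2a-d-1}{2}$ rather than some other constant. One must also remember to invoke $(N2)$ twice — once to guarantee $\alpha>\frac{1}{2}$ so that~\eqref{eq:gl} is applicable, and once to guarantee $n-p+2a-1>0$ so that the final inequality may be inverted. Everything else is routine.
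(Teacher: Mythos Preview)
Your proof is correct and follows essentially the same route as the paper's: both pass to the reweighted mixing density proportional to $u^{(d-1)/2}h(u)$ (the paper calls it $h^*$, you call it $g$), invoke Lemma~\ref{lem:behnzr} to identify its $\lambda$-value with that of a suitable Gamma density, and then feed the resulting inequality into Proposition~\ref{prop:drift_smn}. Your write-up is slightly more detailed in justifying that $g$ is integrable and in spelling out why $\lambda_g<\frac{1}{n-p+2a-1}$ yields an admissible $\lambda$ for~\eqref{eq:key}, but the argument is the same.
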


\begin{proof}
  We can write
\begin{equation}
  \label{eq:hstar}
  \frac{\int_0^\infty u^{\frac{d-2}{2}} \, e^{ -\frac{s u}{2}} \, h(u)
    \, du}{\int_0^\infty u^{\frac{d}{2}} \, e^{ -\frac{s u}{2}} \, h(u)
    \, du} = \frac{\int_0^\infty \frac{1}{\sqrt{u}} \, e^{ -\frac{s
        u}{2}} \, h^*(u) \, du}{\int_0^\infty \sqrt{u} \, e^{ -\frac{s
        u}{2}} \, h^*(u) \, du} \;,
\end{equation}
where $h^*(u)$ is the mixing density that is proportional to
$u^{\frac{d-1}{2}} h(u)$.  It's easy to see that $h^*$ is polynomial
near the origin with power $c' > \frac{n-p+2a-2}{2}$.  (Note that
$(N2)$ implies that $c' > 0$, so the integral in the numerator on the
right-hand side of \eqref{eq:hstar} is finite.)  Let $\tilde{h}$ be
the $\mbox{Gamma}(c'+1,1)$ density, which is clearly polynomial near
the origin with power $c'$.  Then,
  \[
  \lim_{u \rightarrow 0} \frac{h^*(u)}{\tilde{h}(u)} = \lim_{u
    \rightarrow 0} \frac{h^*(u)}{u^{c'}} \frac{u^{c'}}{\tilde{h}(u)}
  \in (0,\infty) \;.
  \]
  Thus, \eqref{eq:gl} and Lemma~\ref{lem:behnzr} imply that
  $\lambda_{h^*} = \lambda_{\tilde{h}} = 1/(2c'+1)$, and the result now
  follows from Proposition~\ref{prop:drift_smn} since
  \[
  \lambda_{h^*} = \frac{1}{2c'+1} < \frac{1}{n-p+2a-1} \;.
  \]
\end{proof}

\subsection{Case III: Faster than polynomial near the origin}
\label{sec:fpno}

\begin{lemma} 
  \label{lem:incrsrt}
  Suppose that $h$ and $\tilde{h}$ are two mixing densities that are
  both strictly positive in a neighborhood of zero.  If there exists
  $\eta > 0$ such that $\frac{h}{\tilde{h}}$ is a strictly increasing
  function on $(0, \eta]$, then $\lambda_h \leq \lambda_{\tilde{h}}$.
\end{lemma}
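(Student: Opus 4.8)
The plan is to follow the proof of Lemma~\ref{lem:behnzr} almost verbatim, with the two-sided comparison $C_{1,\eta}<h/\tilde h<C_{2,\eta}$ used there replaced by a rearrangement (Chebyshev-type) inequality that exploits the fact that $g:=h/\tilde h$ is increasing on $(0,\eta]$. If $\lambda_{\tilde h}=\infty$ there is nothing to prove, so assume $\lambda_{\tilde h}<\infty$, fix $\lambda>\lambda_{\tilde h}$ arbitrarily, and set $\lambda^*=(\lambda_{\tilde h}+\lambda)/2$, so that $\tilde h\in\mathcal{A}(\lambda^*)$, say with constant $k$; showing $h\in\mathcal{A}(\lambda)$ and then letting $\lambda\downarrow\lambda_{\tilde h}$ will give $\lambda_h\le\lambda_{\tilde h}$. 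First I would split the ratio defining $\mathcal{A}(\lambda)$ at $\eta$,
\[
\frac{\int_{\mathbb{R}_+}\frac{1}{\sqrt u}\,e^{-su/2}h(u)\,du}{\int_{\mathbb{R}_+}\sqrt u\,e^{-su/2}h(u)\,du}=\frac{\int_0^\eta\frac{1}{\sqrt u}\,e^{-su/2}h(u)\,du}{\int_{\mathbb{R}_+}\sqrt u\,e^{-su/2}h(u)\,du}+\frac{\int_\eta^\infty\frac{1}{\sqrt u}\,e^{-su/2}h(u)\,du}{\int_{\mathbb{R}_+}\sqrt u\,e^{-su/2}h(u)\,du},
\]
bound the second summand by $1/\eta$ exactly as in Lemma~\ref{lem:behnzr} (using $\frac{1}{\sqrt u}\le\frac1\eta\sqrt u$ on $(\eta,\infty)$), and lower-bound the denominator of the first summand by $\int_0^\eta\sqrt u\,e^{-su/2}h(u)\,du$. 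This reduces everything to proving
\[
\frac{\int_0^\eta\frac{1}{\sqrt u}\,e^{-su/2}h(u)\,du}{\int_0^\eta\sqrt u\,e^{-su/2}h(u)\,du}\ \le\ \frac{\int_0^\eta\frac{1}{\sqrt u}\,e^{-su/2}\tilde h(u)\,du}{\int_0^\eta\sqrt u\,e^{-su/2}\tilde h(u)\,du}\qquad\text{for all }s\ge0.
\]

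This inequality is the heart of the matter. Substituting $h=g\tilde h$ and clearing denominators, it is equivalent to
\[
\Bigl(\int_0^\eta\!\sqrt u\,e^{-su/2}h\Bigr)\Bigl(\int_0^\eta\!\tfrac{1}{\sqrt u}\,e^{-su/2}\tilde h\Bigr)\ \ge\ \Bigl(\int_0^\eta\!\tfrac{1}{\sqrt u}\,e^{-su/2}h\Bigr)\Bigl(\int_0^\eta\!\sqrt u\,e^{-su/2}\tilde h\Bigr),
\]
and I would prove it by writing the difference of the two sides as a double integral over $(0,\eta]^2$ and symmetrizing in the two integration variables, which turns the difference into
\[
\tfrac12\int_0^\eta\!\!\int_0^\eta e^{-s(u+v)/2}\,\tilde h(u)\,\tilde h(v)\,\bigl(g(v)-g(u)\bigr)\,\frac{v-u}{\sqrt{uv}}\,du\,dv\ \ge\ 0,
\]
the nonnegativity being immediate since $g(v)-g(u)$ and $v-u$ always share a sign. (Finiteness of all the integrals is routine: $g\le g(\eta)$ on $(0,\eta]$, and $\tilde h\in\mathcal{A}(\lambda^*)$ already makes the corresponding $\tilde h$-integrals finite, as in Lemma~\ref{lem:behnzr}.)

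To finish I would argue exactly as in Lemma~\ref{lem:behnzr}: since $\int_\eta^\infty\sqrt u\,e^{-su/2}\tilde h(u)\,du / \int_{\mathbb{R}_+}\sqrt u\,e^{-su/2}\tilde h(u)\,du\to0$ as $s\to\infty$, there is an $s_\eta>0$ such that $\int_0^\eta\sqrt u\,e^{-su/2}\tilde h\ge(\lambda^*/\lambda)\int_{\mathbb{R}_+}\sqrt u\,e^{-su/2}\tilde h$ for $s\ge s_\eta$; combining this with $\int_0^\eta\frac{1}{\sqrt u}\,e^{-su/2}\tilde h\le\int_{\mathbb{R}_+}\frac{1}{\sqrt u}\,e^{-su/2}\tilde h$ and $\tilde h\in\mathcal{A}(\lambda^*)$ bounds the first summand of the split above by $\frac{\lambda}{\lambda^*}(\lambda^* s+k)=\lambda s+\frac{\lambda}{\lambda^*}k$ when $s\ge s_\eta$. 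Hence the full ratio is at most $\lambda s+\frac{\lambda}{\lambda^*}k+\frac1\eta$ for $s\ge s_\eta$, while on $[0,s_\eta)$ it is bounded by a constant, so $h\in\mathcal{A}(\lambda)$, as desired. The one genuinely new step relative to Lemma~\ref{lem:behnzr} is the symmetrization that produces the sign-definite integrand; getting the factor $\frac{v-u}{\sqrt{uv}}$ right (and hence the correct direction of the inequality) is the main, though fairly minor, obstacle — everything else is a transcription of the earlier proof.
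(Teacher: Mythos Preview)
Your argument is correct. Both your proof and the paper's hinge on the same truncated-interval inequality
\[
\frac{\int_0^\eta u^{-1/2}e^{-su/2}h(u)\,du}{\int_0^\eta u^{1/2}e^{-su/2}h(u)\,du}
\ \le\
\frac{\int_0^\eta u^{-1/2}e^{-su/2}\tilde h(u)\,du}{\int_0^\eta u^{1/2}e^{-su/2}\tilde h(u)\,du},
\]
but the two proofs of it differ. The paper normalizes $e^{-su/2}h$ and $e^{-su/2}\tilde h$ on $(0,\eta)$ to probability densities, observes that the monotonicity of $h/\tilde h$ forces a single crossing and hence a stochastic ordering, and then reads off the two one-sided inequalities for the expectations of $u\mapsto u^{-1/2}$ and $u\mapsto u^{1/2}$. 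You instead clear denominators and symmetrize the resulting double integral, getting the sign-definite integrand $(g(v)-g(u))(v-u)/\sqrt{uv}$; this is the Chebyshev/FKG route and is equally valid. The second difference is organizational: having established the inequality, the paper simply notes that it says $\lambda_{h_\eta}\le\lambda_{\tilde h_\eta}$ for the truncated densities and then invokes Lemma~\ref{lem:behnzr} (since $h/h_\eta$ and $\tilde h/\tilde h_\eta$ have positive finite limits at $0$) to conclude $\lambda_h=\lambda_{h_\eta}\le\lambda_{\tilde h_\eta}=\lambda_{\tilde h}$. You instead re-run the $\eta$-splitting and $s\ge s_\eta$ argument from Lemma~\ref{lem:behnzr} by hand. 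That works, but the paper's packaging is tidier and avoids repetition; you could shorten your write-up by appealing to Lemma~\ref{lem:behnzr} in the same way once your symmetrization gives the truncated inequality.
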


\begin{proof}
  First, fix $s>0$ and define two densities as follows: $h_{s,\eta}
  (u) = K_{s,\eta} \, e^{ -\frac{s u}{2}} \, h(u) \, I_{(0,\eta)}(u)$
  and $\tilde{h}_{s,\eta} (u) = \tilde{K}_{s,\eta} \, e^{ -\frac{s
      u}{2}} \, \tilde{h}(u) \, I_{(0,\eta)}(u)$, where $K_{s,\eta}$
  and $\tilde{K}_{s,\eta}$ are normalizing constants.  Since
  $\frac{h}{\tilde{h}}$ is strictly increasing on $(0, \eta]$, it
  follows that
\[
\frac{h_{s,\eta} (u)}{\tilde{h}_{s,\eta} (u)} > 1 \Leftrightarrow
\frac{h(u)}{\tilde{h}(u)} > \frac{\tilde{K}_{s,\eta}}{K_{s,\eta}}
\Leftrightarrow u > u^*
\]
for some $u^* \in (0, \eta)$.  This shows that the densities
$\tilde{h}_{s,\eta}$ and $h_{s,\eta}$ cross exactly once in the
interval $(0, \eta)$, which is their common support.  It follows that
a random variable with density $\tilde{h}_{s,\eta}$ is stochastically
dominated by a random variable with density $h_{s,\eta}$.  This
stochastic dominance implies that
\begin{equation}
  \label{eq:dominance}
  \int_0^\eta \frac{1}{\sqrt{u}} \, \tilde{h}_{s,\eta}(u) \, du \ge
  \int_0^\eta \frac{1}{\sqrt{u}} \, h_{s,\eta}(u) \, du \hspace*{4mm}
  \mbox{and} \hspace*{4mm} \int_0^\eta \sqrt{u} \, \tilde{h}_{s,\eta}(u)
  \, du \le \int_0^\eta \sqrt{u} \, h_{s,\eta}(u) \, du \;.
\end{equation}
Now define two more densities as follows
\[
h_\eta(u) = \frac{h(u)}{\int_0^\eta h(v) dv} I_{(0,\eta)}(u)
\hspace*{4mm} \mbox{and} \hspace*{4mm} \tilde{h}_\eta(u) =
\frac{\tilde{h}(u)}{\int_0^\eta \tilde{h}(v) dv} I_{(0,\eta)}(u) \;.
\]
It follows from \eqref{eq:dominance} that
\[
\frac{\int_{\mathbb{R_+}} \frac{1}{\sqrt{u}} \, e^{ -\frac{s u}{2}} \,
  \tilde{h}_\eta(u) \, du}{\int_{\mathbb{R_+}} \sqrt{u} \, e^{
    -\frac{s u}{2}} \, \tilde{h}_\eta(u) \, du} =
\frac{\int_{\mathbb{R_+}} \frac{1}{\sqrt{u}} \, \tilde{h}_{s,\eta}(u)
  \, du}{\int_{\mathbb{R_+}} \sqrt{u} \, \tilde{h}_{s,\eta}(u) \, du}
\ge \frac{\int_{\mathbb{R_+}} \frac{1}{\sqrt{u}} \, h_{s,\eta}(u) \,
  du}{\int_{\mathbb{R_+}} \sqrt{u} \, h_{s,\eta}(u) \, du} =
\frac{\int_{\mathbb{R_+}} \frac{1}{\sqrt{u}} \, e^{ -\frac{s u}{2}} \,
  h_\eta(u) \, du}{\int_{\mathbb{R_+}} \sqrt{u} \, e^{ -\frac{s u}{2}}
  \, h_\eta(u) \, du} \;.
\]
Hence, $\lambda_{h_\eta} \leq \lambda_{\tilde{h}_\eta}$.  Since
\[
\lim_{u \rightarrow 0} \frac{h(u)}{h_\eta(u)} = \int_0^\eta h(v) \, dv
\in \mathbb{R}_+ \hspace*{4mm} \mbox{and} \hspace*{4mm} \lim_{u
  \rightarrow 0} \frac{\tilde{h}(u)}{\tilde{h}_\eta(u)} = \int_0^\eta
\tilde{h}(v) \, dv \in \mathbb{R}_+ \;,
\]
it follows from Lemma~\ref{lem:behnzr} that $\lambda_h =
\lambda_{h_\eta}$ and $\lambda_{\tilde{h}} =
\lambda_{\tilde{h}_\eta}$.
\end{proof}

\begin{corollary}
  \label{prop:fpno}
  Let $h$ be a mixing density that satisfies condition ${\cal M}$.  If
  $h$ is faster than polynomial near the origin, then the posterior
  distribution is proper and the DA Markov chain is geometrically
  ergodic.
\end{corollary}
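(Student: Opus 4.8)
The plan is to follow the template of Corollary~\ref{prop:pno} (Case~II). First I would pass to the auxiliary mixing density $h^*(u) \propto u^{\frac{d-1}{2}} h(u)$, so that by \eqref{eq:hstar} the left-hand side of \eqref{eq:key} becomes exactly the ratio
$\big(\int_0^\infty u^{-1/2} e^{-su/2} h^*(u)\,du\big)\big/\big(\int_0^\infty u^{1/2} e^{-su/2} h^*(u)\,du\big)$
that governs membership of $h^*$ in $\mathcal A(\lambda)$. Thus it suffices to show $\lambda_{h^*} < \frac{1}{n-p+2a-1}$; I in fact expect to prove the stronger statement $\lambda_{h^*}=0$. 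Once that is done, Proposition~\ref{prop:drift_smn} gives both propriety and geometric ergodicity (note $(N2)$ forces $n-p+2a-1>1>0$, so the admissible interval for $\lambda$ is nonempty).

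Before the main step I would record two routine preliminaries. Since $h$ is faster than polynomial near the origin, for each $c>0$ the ratio $h(u)/u^c$ is increasing on some $(0,\eta_c)$, so $h(u)\le (h(u_0)/u_0^c)u^c$ for $u<u_0<\eta_c$; in particular $h$ is bounded near $0$. Consequently $h^*$ is a bona fide mixing density, strictly positive in a neighborhood of $0$ (integrability near $0$ is immediate, and near $\infty$ it follows from condition~$\mathcal M$ since $u^{\frac{d-1}{2}}\le u^{\frac{d}{2}}$ for $u\ge1$), and all the integrals that appear are finite: $\int_0^\infty u^{1/2} e^{-su/2} h^*(u)\,du \propto \int_0^\infty u^{\frac{d}{2}} e^{-su/2} h(u)\,du<\infty$ by $\mathcal M$, while $\int_0^\infty u^{-1/2} e^{-su/2} h^*(u)\,du \propto \int_0^\infty u^{\frac{d-2}{2}} e^{-su/2} h(u)\,du$ is finite near $0$ because $h$ is bounded there and near $\infty$ because $u^{\frac{d-2}{2}}\le u^{\frac{d}{2}}$ for $u\ge1$.

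The heart of the argument is a comparison of $h^*$ with a Gamma density via Lemma~\ref{lem:incrsrt}. Fix $\alpha>\frac{d+1}{2}$ and let $\tilde h_\alpha$ be the $\mathrm{Gamma}(\alpha,1)$ density. Then
\[
\frac{h^*(u)}{\tilde h_\alpha(u)} = C_\alpha\, e^{u}\, \frac{h(u)}{u^{\alpha-1-\frac{d-1}{2}}}
\]
for a positive constant $C_\alpha$, and the exponent $c_\alpha := \alpha-1-\frac{d-1}{2}$ is strictly positive by the choice of $\alpha$. Because $h$ is faster than polynomial near the origin, $h(u)/u^{c_\alpha}$ is strictly increasing on some $(0,\eta_{c_\alpha})$, and $e^{u}$ is strictly increasing everywhere, so their product $h^*(u)/\tilde h_\alpha(u)$ is strictly increasing on $(0,\eta_{c_\alpha})$. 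Lemma~\ref{lem:incrsrt} then gives $\lambda_{h^*}\le\lambda_{\tilde h_\alpha}$, and by \eqref{eq:gl} (valid since $\alpha>1/2$) we have $\lambda_{\tilde h_\alpha}=\frac{1}{2\alpha-1}$. Letting $\alpha\to\infty$ yields $\lambda_{h^*}=0$. Hence $h^*\in\mathcal A(\lambda)$ for every $\lambda>0$ (by monotonicity of $\mathcal A(\cdot)$ in $\lambda$), so choosing, say, $\lambda=\frac{1}{2(n-p+2a-1)}$ and $L=k_\lambda$, the inequality \eqref{eq:key} holds and Proposition~\ref{prop:drift_smn} finishes the proof.

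I expect the only delicate point to be the monotonicity bookkeeping in the displayed ratio: one must take the power $c_\alpha$ strictly positive, which forces $\alpha>\frac{d+1}{2}$ (not merely $\alpha>1/2$), so that the faster-than-polynomial hypothesis is applied to $h$ directly rather than to $h^*$; this, together with the integrability checks above, is the content, and the rest is a transcription of the Case~II argument with Lemma~\ref{lem:incrsrt} replacing Lemma~\ref{lem:behnzr}.
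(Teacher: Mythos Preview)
Your proposal is correct and follows essentially the same route as the paper: pass to $h^*(u)\propto u^{\frac{d-1}{2}}h(u)$, use Lemma~\ref{lem:incrsrt} to compare $h^*$ with a one-parameter family whose $\lambda$-value tends to $0$, and conclude $\lambda_{h^*}=0$ so that Proposition~\ref{prop:drift_smn} applies. The only cosmetic difference is the choice of comparison family---the paper first observes that $h^*$ itself is faster than polynomial near the origin and then compares with $\tilde h(u)=(c+1)u^c I_{(0,1)}(u)$, whereas you compare with $\mathrm{Gamma}(\alpha,1)$ and apply the faster-than-polynomial hypothesis to $h$ directly, absorbing the resulting $e^u$ factor; both variants work and yield the same conclusion.
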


\begin{proof}
  Again, define $h^*(u)$ to be the mixing density that is proportional
  to $u^{\frac{d-1}{2}} h(u)$.  In light of \eqref{eq:hstar}, it
  suffices to show that $\lambda_{h^*} = 0$.  First, note that $h^*$
  is faster than polynomial near the origin.  Fix $c>0$ and define
  $\tilde{h}(u) = (c+1)\, u^c \, I_{(0,1)}(u)$.  Clearly,
  $\lambda_{\tilde{h}} = \frac{1}{2c+1}$.  Since $h^*$ is faster than
  polynomial near the origin, there exists $\eta_c \in (0,1)$ such
  that $\frac{h^*(u)}{\tilde{h}(u)}$ is strictly increasing in
  $(0,\eta_c)$.  Thus, Lemma~\ref{lem:incrsrt} implies that
  $\lambda_{h^*} \le \lambda_{\tilde{h}} = \frac{1}{2c+1}$.  But $c$
  was arbitrary, so $\lambda_{h^*} = 0$.  The result now follows
  immediately from Proposition~\ref{prop:drift_smn}.
\end{proof}

Taken together, Corollaries~\ref{prop:zno}, \ref{prop:pno} and
\ref{prop:fpno} are equivalent to Theorem~\ref{thm:main}.  Hence, our
proof of Theorem~\ref{thm:main} is complete.

\section{Examples and a result concerning mixtures of mixing
  densities}
\label{sec:ex}

We claimed in the Introduction that every mixing density which is a
member of a standard parametric family is either polynomial near the
origin, or faster than polynomial near the origin.  Here we provide
some details.  When we write $W \sim \mbox{Gamma}(\alpha, \gamma)$, we
mean that $W$ has density proportional to $w^{\alpha-1} e^{-w \gamma}
I_{\mathbb{R}_+}(w)$.  By $W \sim \mbox{Beta}(\alpha, \gamma)$, we
mean that the density is proportional to $w^{\alpha-1}
(1-w)^{\gamma-1} I_{(0,1)}(w)$, and by $W \sim
\mbox{Weibull}(\alpha,\gamma)$, we mean that the density is
proportional to $w^{\alpha-1} e^{-\gamma w^\alpha}
I_{\mathbb{R}_+}(w)$.  In all three cases, we need $\alpha,\gamma>0$.
It is clear that these densities are all polynomial near the origin
with $c=\alpha-1$.  Moreover, condition ${\cal M}$ always holds.
Hence, according to Theorem~\ref{thm:main}, if the mixing density is
$\mbox{Gamma}(\alpha, \gamma)$, $\mbox{Beta}(\alpha, \gamma)$ or
$\mbox{Weibull}(\alpha,\gamma)$ with $\alpha > \frac{n-p+2a-d+1}{2}$,
then the DA Markov chain is geometrically ergodic.

By $W \sim \mbox{F}(\nu_1,\nu_2)$, we mean that $W$ has density
proportional to
\[
\frac{w^{(\nu_1-2)/2}}{\big(1 + (\frac{\nu_1}{\nu_2}) w
  \big)^{(\nu_1+\nu_2)/2}} \, I_{\mathbb{R}_+}(w) \;,
\]
where $\nu_1,\nu_2>0$.  These densities are polynomial near the origin
with $c=(\nu_1-2)/2$.  To get a geometric chain in this case, we need
$\nu_1 > n-p+2a-d+1$ and $\nu_2>d$.  (The second condition is to
ensure that condition ${\cal M}$ holds.)  Consider the shifted Pareto
family with density given by
\[
\frac{\gamma \alpha^\gamma}{(w+\alpha)^{\gamma+1}} \,
I_{\mathbb{R}_+}(w) \;,
\]
where $\alpha,\gamma>0$.  This density is polynomial near the origin
with $c=0$.  Since the requirement that $c > \frac{n-p+2a-d-1}{2}$
forces $c$ to be strictly positive, Theorem 1 is not applicable to
this family.

By $W \sim \mbox{IG}(\alpha, \gamma)$, we mean that $W$ has density
proportional to $w^{-\alpha-1} e^{-\gamma/w} I_{\mathbb{R}_+}(w)$,
where $\alpha,\gamma>0$.  For any $c>0$, the derivative of
$\log(h(w)/w^c)$ is
\[
\frac{-(\alpha+c+1)}{w} + \frac{\gamma}{w^2} = \frac{1}{w} \bigg[
-(\alpha+c+1) + \frac{\gamma}{w} \bigg] \;,
\]
which is clearly strictly positive in a neighborhood of zero.  Hence,
the $\mbox{IG}(\alpha,\gamma)$ densities are all faster than
polynomial near the origin.  Thus, Theorem~\ref{thm:main} implies
that, as long as $\alpha>d/2$, the DA Markov chain is geometrically
ergodic.

By $W \sim \mbox{GIG}(v,a,b)$, we mean that $W$ has a generalized
inverse Gaussian distribution with density given by
\begin{equation*}
  h(w) = \frac{1}{2 K_v \big( \sqrt{ab} \big)} \Big( \frac{a}{b}
  \Big)^{\frac{v}{2}} w^{v-1} \exp \Big \{ - \frac{1}{2} \Big(aw +
  \frac{b}{w} \Big) \Big \} I_{\mathbb{R}_+}(w) \;,
\end{equation*}
where $a,b \in \mathbb{R}_+$ and $v \in \mathbb{R}$.  Taking $v =
-\frac{1}{2}$ leads to the standard inverse Gaussian density (with a
nonstandard parametrization).  By $W \sim
\mbox{Log-normal}(\mu,\gamma)$, we mean that $W$ has density
proportional to
\begin{equation*}
  \frac{1}{w} \exp \Big \{ -\frac{1}{2 \gamma} \Big( \log w - \mu)^2 \Big)
  \Big \} I_{\mathbb{R}_+}(w) \;,
\end{equation*}
where $\mu \in \mathbb{R}$ and $\gamma>0$.  By $W \sim
\mbox{Fr\'{e}chet}(\alpha,\gamma)$, we mean that $W$ has density
proportional to
\begin{equation*}
  w^{-(\alpha+1)} \, e^{-\frac{\gamma^\alpha}{w^\alpha}} \, 
  I_{\mathbb{R}_+}(w) \;,
\end{equation*}
where $\alpha,\gamma>0$.  Arguments similar to those used in the
inverted gamma case above show that all members of these three
families are faster than polynomial near the origin.  Moreover,
condition ${\cal M}$ holds for all the Log-normal and GIG densities,
and for all $\mbox{Fr\'{e}chet}(\alpha,\gamma)$ densities with
$\alpha>d/2$.  Thus, the corresponding DA Markov chains are all
geometric.

We end this section with a result concerning mixtures of mixing
densities.

\begin{proposition}
  \label{prop:mix}
  Let $I$ be an index set equipped with a probability measure
  $\xi$. Consider a family of mixing densities $\{h_a\}_{a \in I}$
  such that $\lambda_{h_a} = 0$ for every $a \in I$.  In particular,
  for every $a \in I$ and every $\lambda \in (0,1)$, there exists
  $k_{a, \lambda} > 0$ such that
\[
\frac{\int_0^\infty \frac{1}{\sqrt{u}} \, e^{ -\frac{s u}{2}} \,
  h_a(u) \, du}{\int_0^\infty \sqrt{u} \, e^{ -\frac{s u}{2}} \,
  h_a(u) \, du} \le \lambda s + k_{a,\lambda}
\]
for every $s \ge 0$.  Suppose that, for every $\lambda \in (0,1)$,
\begin{equation}
  \label{eq:sup}
\sup_{a \in I} k_{a, \lambda} < \infty \;.
\end{equation}
Then $\lambda_h = 0$ where $h(u) = \int_I h_a(u) \, \xi(da)$.
\end{proposition}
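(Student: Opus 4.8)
The plan is to show directly that the mixture density $h$ satisfies the defining inequality of $\mathcal{A}(\lambda)$ for every $\lambda \in (0,1)$, which by definition gives $\lambda_h = 0$. Fix $\lambda \in (0,1)$ and $s \ge 0$. The key observation is that the ratio whose bound we need,
\[
\frac{\int_0^\infty u^{-1/2} \, e^{-su/2} \, h(u) \, du}{\int_0^\infty u^{1/2} \, e^{-su/2} \, h(u) \, du} \;,
\]
has a numerator that is linear in the mixing, since $h(u) = \int_I h_a(u)\, \xi(da)$ and Tonelli lets me swap the order of integration: $\int_0^\infty u^{-1/2} e^{-su/2} h(u)\,du = \int_I \big[\int_0^\infty u^{-1/2} e^{-su/2} h_a(u)\,du\big]\,\xi(da)$, and similarly for the denominator. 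So the whole difficulty is that the ratio of two $\xi$-averages is not the average of the ratios.

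First I would handle the numerator. For each $a$, the per-component inequality gives $\int_0^\infty u^{-1/2} e^{-su/2} h_a(u)\,du \le (\lambda s + k_{a,\lambda}) \int_0^\infty u^{1/2} e^{-su/2} h_a(u)\,du$. Writing $K := \sup_{a \in I} k_{a,\lambda} < \infty$ by \eqref{eq:sup}, this yields $\int_0^\infty u^{-1/2} e^{-su/2} h_a(u)\,du \le (\lambda s + K)\int_0^\infty u^{1/2} e^{-su/2} h_a(u)\,du$ uniformly in $a$. Now integrate both sides against $\xi(da)$: by Tonelli the left side becomes the numerator for $h$, and the right side — since $\lambda s + K$ does not depend on $a$ — becomes $(\lambda s + K)$ times the denominator for $h$. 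That gives exactly
\[
\frac{\int_0^\infty u^{-1/2} \, e^{-su/2} \, h(u) \, du}{\int_0^\infty u^{1/2} \, e^{-su/2} \, h(u) \, du} \le \lambda s + K
\]
for every $s \ge 0$, provided the denominator is finite and positive. Finiteness of the denominator is condition $\mathcal{M}$ for $h$ (up to the $u^{d/2}$ versus $u^{1/2}$ discrepancy — here we are in the reduced $d=1$-type normalization coming from $h^*$, so $\int_0^\infty u^{1/2} h(u)\,du < \infty$ follows from the corresponding bound for each $h_a$ together with the finite-supremum hypothesis applied at $s=0$, or simply by noting each $h_a$ satisfies condition $\mathcal{M}$ and using \eqref{eq:sup} at $s = 0$); positivity is immediate since $h$ is a genuine density. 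Since $\lambda \in (0,1)$ was arbitrary and $K = K(\lambda) < \infty$, we conclude $h \in \mathcal{A}(\lambda)$ for all such $\lambda$, hence $\lambda_h = 0$, and the proof is complete.

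The main obstacle — and it is a mild one — is the bookkeeping needed to justify the Tonelli interchange and to be sure the integrals are finite before dividing: one must check that $\int_I \int_0^\infty u^{1/2} e^{-su/2} h_a(u)\,du\,\xi(da)$ is finite and nonzero, which is where the uniform bound \eqref{eq:sup} (evaluated at $s=0$, giving $\int_0^\infty u^{-1/2} h_a(u)\,du \le k_{a,\lambda}\int_0^\infty u^{1/2} h_a(u)\,du$ with $k_{a,\lambda} \le K$) combines with condition $\mathcal{M}$ to do the work. Everything else is a one-line averaging argument; the only real content is that pulling the factor $\lambda s + K$ out of the $\xi$-integral is legitimate precisely because the supremum in \eqref{eq:sup} is finite, so the bound is genuinely uniform in $a$.
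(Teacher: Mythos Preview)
Your argument is correct and is essentially identical to the paper's proof: both fix $\lambda\in(0,1)$, expand the numerator via Tonelli as $\int_I\big(\int_0^\infty u^{-1/2}e^{-su/2}h_a(u)\,du\big)\xi(da)$, apply the per-$a$ bound to replace the inner integral by $(\lambda s+k_{a,\lambda})\int_0^\infty u^{1/2}e^{-su/2}h_a(u)\,du$, dominate $k_{a,\lambda}$ by $K=\sup_a k_{a,\lambda}$, and then use Tonelli again so that the $\xi$-integral reproduces the denominator. Your side discussion about condition~$\mathcal{M}$ and finiteness of the denominator is somewhat tangential (the proposition is stated purely in terms of $\lambda_h$ and the paper's proof does not invoke condition~$\mathcal{M}$ at all), but it does no harm.
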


\begin{proof}
  Fix $\lambda \in (0,1)$.  For every $s \ge 0$, we have
\begin{align*}
  \frac{\int_0^\infty \frac{1}{\sqrt{u}} \, e^{ -\frac{s u}{2}} \,
    h(u) \, du}{\int_0^\infty \sqrt{u} \, e^{ -\frac{s u}{2}} \, h(u)
    \, du} & = \frac{\int_I \big( \int_0^\infty \frac{1}{\sqrt{u}} \,
    e^{ -\frac{s u}{2}} \, h_a(u) \, du \big) \xi(da)}{\int_0^\infty
    \sqrt{u} \, e^{ -\frac{s u}{2}} \, h(u) \, du} \\ & \le
  \frac{\int_I (\lambda s + k_{a,\lambda}) \big( \int_0^\infty
    \sqrt{u} \, e^{ -\frac{s u}{2}} \, h_a(u) \, du \big)
    \xi(da)}{\int_0^\infty \sqrt{u} \, e^{ -\frac{s u}{2}} \, h(u) \,
    du} \\ & \le \big( \lambda s + \sup_{a \in I} k_{a, \lambda} \big)
  \frac{\int_I \int_0^\infty \sqrt{u} \, e^{ -\frac{s u}{2}} \, h_a(u)
    \, du \, \xi(da)}{\int_0^\infty \sqrt{u} \, e^{ -\frac{s u}{2}} \,
    h(u) \, du} \\ & = \lambda s + \sup_{a \in I} k_{a, \lambda} \;.
\end{align*}
Since this holds for all $\lambda \in (0,1)$, the result follows.
\end{proof}

\begin{remark}
  If the index set, $I$, in Proposition~\ref{prop:mix} is a finite
  set, then \eqref{eq:sup} is automatically satisfied.
\end{remark}

Here's a simple application of Proposition~\ref{prop:mix}.

\begin{proposition}
  \label{prop:mix2}
  Let $\{h_i\}_{i=1}^M$ be a finite set of mixing densities that all
  satisfy condition ${\cal M}$, and are all either zero near the
  origin, or faster than polynomial near the origin.  Define
\[
h(u) = \sum_{i=1}^M w_i \, h_i(u) \;,
\]
where $w_i > 0$ and $\sum_{i=1}^M w_i = 1$.  Then the posterior
distribution is proper and the DA Markov chain is geometrically
ergodic.
\end{proposition}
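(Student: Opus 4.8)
The plan is to observe that $h$ itself belongs to one of the two classes already handled by Theorem~\ref{thm:main}, and then simply quote that theorem. Concretely, I claim that a finite positive mixture $h=\sum_{i=1}^M w_i h_i$ of densities, each of which is either zero near the origin or faster than polynomial near the origin, is again either zero near the origin or faster than polynomial near the origin; and that $h$ satisfies condition ${\cal M}$. The latter is immediate, since $\int_0^\infty u^{d/2}h(u)\,du=\sum_{i=1}^M w_i\int_0^\infty u^{d/2}h_i(u)\,du<\infty$. Given the claim, Corollary~\ref{prop:zno} or Corollary~\ref{prop:fpno} applies directly to $h$, and we are done.

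To prove the claim, split the index set as $Z\cup F$, where $h_i$ is zero near the origin for $i\in Z$ (say $h_i\equiv 0$ on $(0,\delta_i)$) and faster than polynomial near the origin for $i\in F$ (so for each $c>0$ there is $\eta_{c,i}>0$ with $h_i(u)/u^c$ strictly increasing on $(0,\eta_{c,i})$). Put $\delta_*=\min_{i\in Z}\delta_i$ (interpreted as $+\infty$ if $Z=\emptyset$). If $F=\emptyset$ then $h\equiv 0$ on $(0,\delta_*)$ and $h$ is zero near the origin. If $F\neq\emptyset$, then for $u\in(0,\delta_*)$ we have $h(u)=\sum_{i\in F}w_i h_i(u)>0$, so $h$ is strictly positive near $0$; and for any $c>0$, setting $\eta_c=\min\bigl(\delta_*,\min_{i\in F}\eta_{c,i}\bigr)>0$, the function $h(u)/u^c=\sum_{i\in F}w_i\bigl(h_i(u)/u^c\bigr)$ is, on $(0,\eta_c)$, a positive-weighted sum of strictly increasing functions, hence strictly increasing. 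Thus $h$ is faster than polynomial near the origin. This proves the claim, and hence the proposition.

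There is really no hard step here --- the only thing to be careful about is the bookkeeping over the split $Z\cup F$ (in particular, that ``faster than polynomial near the origin'' requires strict positivity of $h$ near $0$, which is why one must check that in the $F\neq\emptyset$ case some component does not vanish near the origin). Alternatively, and in the spirit of the preceding development, one can route the argument through the scalar quantities $\lambda_{h^*}$: writing $h^*\propto u^{(d-1)/2}h$ and $h_i^*\propto u^{(d-1)/2}h_i$, one has $h^*=\sum_i\tilde w_i h_i^*$ with $\tilde w_i\propto w_i\int_0^\infty u^{(d-1)/2}h_i(u)\,du$ (finite by ${\cal M}$, and positive), each $\lambda_{h_i^*}=0$ by (the proofs of) Corollary~\ref{prop:zno} and Corollary~\ref{prop:fpno} together with \eqref{eq:hstar}, and then $\lambda_{h^*}=0$ by Proposition~\ref{prop:mix} (whose condition \eqref{eq:sup} is automatic for a finite index set); feeding $\lambda_{h^*}=0$ and \eqref{eq:hstar} into Proposition~\ref{prop:drift_smn} finishes the proof along exactly the lines of Corollaries~\ref{prop:pno} and~\ref{prop:fpno}.
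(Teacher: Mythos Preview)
Your proof is correct. Your primary route---showing that the mixture $h$ is itself either zero near the origin or faster than polynomial near the origin, and then invoking Theorem~\ref{thm:main} directly---is genuinely different from the paper's argument and in fact cleaner: it bypasses Proposition~\ref{prop:mix} entirely and reduces Proposition~\ref{prop:mix2} to a short classification exercise. The only cosmetic wrinkle is that in the $F\neq\emptyset$ case you assert $h(u)>0$ on all of $(0,\delta_*)$, whereas strictly you need to intersect with the positivity neighborhoods of the $h_i$, $i\in F$; this does not affect the conclusion.

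The paper's proof instead routes through Proposition~\ref{prop:mix}: one shows each component has $\lambda$-value zero, applies the mixture proposition (trivially, since the index set is finite), and then feeds the result into Proposition~\ref{prop:drift_smn} as in the proof of Corollary~\ref{prop:fpno}. Your ``alternative'' paragraph is exactly this argument, and indeed is spelled out more carefully than the paper's one-line version: you correctly pass to $h^*\propto u^{(d-1)/2}h$ and rewrite it as a mixture $\sum_i\tilde w_i h_i^*$ with reweighted coefficients before applying Proposition~\ref{prop:mix}, which is what is actually needed to reach the hypothesis of Proposition~\ref{prop:drift_smn} via \eqref{eq:hstar}. So your second route matches the paper's intent, while your first route is a self-contained shortcut that has the added benefit of showing the class ``zero or faster than polynomial near the origin'' is closed under finite positive mixtures.
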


\begin{proof}
  Since Proposition~\ref{prop:mix} implies that $\lambda_h=0$, the
  arguments in the proof of Corollary~\ref{prop:fpno} can be applied
  to prove the result.
\end{proof}

\vspace*{5mm}

\noindent {\bf \large Acknowledgment}.  The first author was supported
by NSF Grants DMS-11-06395 \& DMS-15-11945, and the third by NSF Grants
DMS-11-06084 \& DMS-15-11945.

\vspace*{8mm}

\noindent {\Large \bf Appendix: Matrix Normal and Inverse Wishart Densities}
\begin{appendix}

\begin{description}
\item[Matrix Normal Distribution] Suppose $Z$ is an $r \times c$
  random matrix with density
\[
f_{Z}(z) = \frac{1}{(2\pi)^{\frac{rc}{2}} |A|^{\frac{c}{2}}
  |B|^{\frac{r}{2}}} \exp \bigg[ -\frac{1}{2}\mbox{tr} \Big\{ A^{-1}(z
  - \theta) B^{-1} (z - \theta)^T \Big\} \bigg] \;,
\]
where $\theta$ is an $r \times c$ matrix, $A$ and $B$ are $r \times r$
and $c \times c$ positive definite matrices.  Then $Z$ is said to have
a \textit{matrix normal distribution} and we denote this by $Z \sim
\mbox{N}_{r,c} (\theta,A,B)$ \citep[][Chapter 17]{arno:1981}.

\item[Inverse Wishart Distribution] Suppose $W$ is an $r \times r$
  random positive definite matrix with density
\[
f_{W}(w) = \frac{|w|^{-\frac{m+r+1}{2}} \exp \Big \{ -\frac{1}{2}
  \mbox{tr} \big( \Theta^{-1} w^{-1} \big) \Big\}}{ 2^{\frac{mr}{2}}
  \pi^{\frac{r(r-1)}{4}} |\Theta|^{\frac{m}{2}} \prod_{i=1}^r \Gamma
  \big( \frac{1}{2}(m+1-i) \big)} I_{{\cal S}_r}(W) \;,
\]
where $m > r-1$ and $\Theta$ is an $r \times r$ positive definite
matrix.  Then $W$ is said to have an \textit{inverse Wishart
  distribution} and this is denoted by $W \sim \mbox{IW}_r(m,
\Theta)$.
\end{description}
\end{appendix}

\bibliographystyle{ims}
\bibliography{refs}

\end{document}